\newtheorem{thm}{Theorem}[section]
\newtheorem{lemma}[thm]{Lemma}
\newtheorem{prop}[thm]{Proposition}
\newtheorem{remark}[thm]{Remark}
\newcommand{\varep}{\varepsilon}
\def\XXint#1#2#3{{\setbox0=\hbox{$#1{#2#3}{\int}$}
         \vcenter{\hbox{$#2#3$}}\kern-.5\wd0}}
\def\R{\mathbb{R}}
\def\e{\varepsilon}
\numberwithin{equation}{section}
\begin{document}

\title{$W^{1,p}$ estimates for Schr\"odinger equation in the region above a convex graph}

\author{Ziyi Xu}
\date{}
\maketitle

\begin{abstract}
  We investigate the $W^{1,p}$ estimates of the Neumann problem for the Schr\"odinger equation $-\Delta u+ V u={\rm div}(f)$ in the region above a convex graph. For any $p>2$, we obtain a sufficient condition for the $W^{1,p}$ solvability. As a result, we obtain sharp $W^{1,p}$ estimate
$$\|\nabla u\|_{L^p(\Omega)}+\|V^\frac{1}{2}u\|_{L^p(\Omega)}\leq C\|f\|_{L^p(\Omega)}$$
  for $1 <p<\infty$ with $d\geq2$ under the assumption that $V$ is a $B_\infty$ weight.
\end{abstract}


\section{\bf Introduction}\label{section-1}

The purpose of this paper is to establish $W^{1,p}$ solvability for Schr\"odinger operator in the region above a convex graph. Precisely, let
$$\Omega=\{(x',t): x'\in \mathbb{R}^{d-1}, t\in\mathbb{R} \text{ and }t>\phi(x')\}\subset\mathbb{R}^{d},$$
where $\phi:\mathbb{R}^{d-1}\to\mathbb{R}$ is a convex function with $\|\nabla\phi\|_{L^\infty(\mathbb{R}^{d-1})}\leq M$.
For $f\in L^p(\Omega,\R^d)$ and $g\in B^{-\frac 1p,p}(\partial\Omega)$, we consider the Schr\"odinger equation
  \begin{equation}\label{NP}
  \begin{aligned}
    \begin{cases}
      -\Delta  u+V u={\rm div}\,f \quad\ \,\text{in } \Omega,\\[0.1cm]
      \displaystyle\frac{\partial u}{\partial n}  =-f\cdot n+g \quad  \text{on } \partial\Omega,\\[0.2cm]
      u\in W^{1,p}(\Omega)
    \end{cases}
  \end{aligned}
  \end{equation}
Following the tradition and physical significance, $V$ is referred to be the electric potential. Throughout this paper, we assume that $0<V\in B_\infty$, i.e., $V\in L_{loc}^\infty(\mathbb{R}^d)$, and there exists a constant $C>0$ such that, for all ball $B\subset \mathbb{R}^d$
\begin{equation}\label{V}
    \|V\|_{L^\infty(B)}\leq  C\fint_B  V\,dx.
\end{equation}
Examples of $B_\infty$ weights are $|x|^a$ with $0\leq a<\infty$.

To state the main result of the paper, let $n$ denote the outward unit normal to $\partial\Omega$ and $B^{\alpha,p}(\partial\Omega)$ with $0<\alpha<1$ and $1<p<\infty$ denote the Besov spaces on $\partial\Omega$.

\begin{thm}\label{thm1}
  Let $\Omega$ be the region above a convex graph. Suppose $V(x)>0$ a.e. satisfies \eqref{V}. Then given $f\in L^p(\Omega,\R^d)$ and $g\in B^{-\frac 1p,p}(\partial\Omega)$ with
  $$1<p<\infty,$$
  the Neumann problem \eqref{NP} is uniquely solvable. Moreover, the solution satisfies
  \begin{equation}
    \label{result-NP}
    \|\nabla u\|_{L^p(\Omega)}+\|V^{\frac{1}{2}} u\|_{L^p(\Omega)}\leq C\left\{\|f\|_{L^p(\Omega)}+\|g\|_{B^{-\frac 1p,p}(\partial\Omega)}\right\},
  \end{equation}
  where $C$ depending only on $d,p$ and the Lipschitz character of $\Omega$.
\end{thm}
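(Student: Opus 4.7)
The plan is a three-step scheme: establish the $p=2$ case by the direct variational method, extrapolate to $p>2$ through Shen's real-variable argument based on a reverse H\"older inequality for local solutions, and recover $1<p<2$ by duality.

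The case $p=2$ is essentially immediate. After absorbing the Neumann datum $g$ by a suitable extension, the bilinear form
$$a(u,v):=\int_\Omega \nabla u\cdot\nabla v\,dx+\int_\Omega Vuv\,dx$$
is continuous and coercive on $W^{1,2}(\Omega)$; coercivity uses $V>0$ as a mass term, so no spectral gap is needed despite $\Omega$ being unbounded. Lax--Milgram then produces a unique $u\in W^{1,2}(\Omega)$ satisfying the desired estimate at $p=2$.

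For $p>2$, I would invoke the real-variable machinery of Shen, which reduces \eqref{result-NP} to the following local reverse H\"older inequality on balls $B=B(x_0,r)$ with $x_0\in\overline\Omega$: for every weak solution $w$ of $-\Delta w+Vw=0$ in $4B\cap\Omega$ with $\partial w/\partial n=0$ on $4B\cap\partial\Omega$, one must show, for some $q>p$,
$$\left(\fint_{B\cap\Omega}\bigl(|\nabla w|^q+V^{q/2}|w|^q\bigr)\,dx\right)^{\!1/q}\le C\left(\fint_{2B\cap\Omega}\bigl(|\nabla w|^2+V|w|^2\bigr)\,dx\right)^{\!1/2}.$$
Interior balls ($2B\subset\Omega$) are handled by Caccioppoli together with Moser iteration, with the $B_\infty$ property of $V$ providing the higher integrability needed to close the scheme at any finite $q$. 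Once all $p>2$ are established, duality using the formal self-adjointness of $-\Delta+V$ yields the range $1<p<2$.

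The main obstacle is the boundary case of the reverse H\"older inequality. Here the convexity of $\phi$ is essential: on a convex domain, Grisvard's classical $H^2$ estimate for the Neumann Laplacian,
$$\|D^2 w\|_{L^2(\Omega)}\le C\|\Delta w\|_{L^2(\Omega)}\quad\text{whenever }\partial w/\partial n=0,$$
holds because the boundary term $\int_{\partial\Omega}\langle \nabla_T w,II(\nabla_T w)\rangle\,d\sigma$ produced by integration by parts has the right sign thanks to the nonnegative second fundamental form. Applying a localized version on $2B$ and using $\Delta w=Vw$ then controls $\|D^2 w\|_{L^2(B)}$ in terms of lower-order norms, with the $V$-contribution absorbed via the self-improvement of $V\in B_\infty$. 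Sobolev embedding promotes $\nabla w$ from $L^2$ to $L^{2^*}$, and a Gehring-type iteration reaches any finite exponent. The merely Lipschitz (rather than $C^2$) regularity of $\partial\Omega$ is handled by approximating $\phi$ with smooth convex functions and passing to the limit with estimates uniform in the approximation.
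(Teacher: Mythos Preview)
Your overall architecture matches the paper's---$p=2$ by energy, $p>2$ via a reverse H\"older inequality plus Shen's real-variable lemma, and the boundary reverse H\"older through the sign of the second fundamental form (the paper's Lemma~\ref{convex-estimate} carries this out via a level-set integration-by-parts). But there is a genuine gap in the duality step for the potential term. The map $f\mapsto\nabla u$ is self-dual, so $\|\nabla u\|_{L^p}\le C\|f\|_{L^p}$ does transfer from $p>2$ to $1<p<2$; the map $f\mapsto V^{1/2}u$, however, is \emph{not} self-dual. Its adjoint is $h\mapsto\nabla v$ where $-\Delta v+Vv=V^{1/2}h$ with homogeneous Neumann data, and nothing in your scheme yields $\|\nabla v\|_{L^q}\le C\|h\|_{L^q}$ for this right-hand side, which is not in divergence form. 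The paper does not use duality here at all: it bounds $\|V^{1/2}u\|_{L^p}$ for every $1<p<\infty$ through the representation $u(x)=-\int_\Omega\nabla_yN(x,y)\cdot f(y)\,dy$, the pointwise inequality $V^{1/2}\le Cm(\cdot,V)$, and the weighted Neumann-kernel estimates $\int_\Omega|\nabla_yN(x,y)|\,dy\le Cm(x,V)^{-1}$ and $\int_\Omega m(x,V)|\nabla_xN(x,y)|\,dx\le C$, which follow from Shen's decay bound \eqref{N-estimate} plus Caccioppoli. The boundary datum $g$ is treated the same way, via a dual problem whose gradient is again controlled by these Neumann-function estimates.

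A smaller but real issue in your reverse H\"older sketch: ``absorb the $V$-contribution via $B_\infty$'' hides the difficulty that $r^2\fint_B V$ may be arbitrarily large, so after one Grisvard/Sobolev step the term $r\big(\fint_B|Vw|^p\big)^{1/p}$ is not obviously dominated by the energy on $2B$. The paper's device is the $L^\infty$ estimate with decay, $\sup_{D(x_0,r/2)}|w|\le C_k(1+rm(x_0,V))^{-k}\big(\fint_{D(x_0,r)}|w|^2\big)^{1/2}$ (Lemma~\ref{pointwise estimate}), combined with the improved Fefferman--Phong inequality (Lemma~\ref{Fefferman-Phong}) and the growth bound $r^2\fint_B V\le C(rm(x_0,V))^{k_0}$; the decay factor kills the growth, and this is what closes the iteration uniformly in $r$.
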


\begin{remark}
  The range of $p$ is sharp even for the Laplacian.
\end{remark}

The $W^{1,p}$ estimates for inhomogeneous equation $\Delta u=F$ in bounded non-smooth domains have been studied extensively.
Indeed, it has been known since the 1990s that the Dirichlet problem and the Neumann problem is $W^{1,p}$ solvable in bounded Lipschitz domains for $\frac32-\e<p<3+\e$ when $d\geq3$ where $\varep>0$ depends on
$\Omega$.  For a general second order elliptic equation with coefficients belonging to $VMO$, $W^{1,p}$ estimates are reduced to the weak reverse H\"older estimates for local solutions and are established for $\frac32-\e<p<3+\e$ when $d\geq3$. And the ranges of $p$ are sharp (see \cite{Fabes-1998,JK-1995, Geng-2012,Shen-2005}; also see \cite{AQ-2002,Byun-2005,Byun-Wang-2004,Byun-Wang-2005,Byun-Wang-2008,Dong-Kim-2010} for
references on related work on boundary value problems in bounded Lipschitz domains or  Reifenberg flat domains). It is worth noting that for every $p>3$ and $d\geq3$ there is a Lipschitz domain such that $\nabla u$ cannot belong to $L^p(\Omega)$ even if the right side $F$ is in $C^\infty$.
If a slightly stronger smoothness condition is imposed, that $\Omega$ is a bounded convex domain, the $W^{1,p}$ solvability was
essentially established in \cite{Geng-2018} for the sharp range $1<p<\infty$. Regarding the convexity of $\Omega$, with the analysis tools developed in \cite{GS-JFA-2010} at disposal, the weak reverse H\"older inequality
\begin{equation}\label{rh}
  \left(\fint_{B(x,r)\cap\Omega}|\nabla u|^pdx\right)^{\frac1p}\leq C\left(\fint_{B(x,2r)\cap\Omega}|\nabla u|^2dx\right)^{\frac12},\quad\text{for } p>3
\end{equation}
which is the sufficient condition to the $W^{1,p}$ estimate is established.

For Schr\"odinger equations, Z. Shen \cite{Shen-2024} obtained the $W^{1,p}$ estimate for Dirichlet problems for $2 < p < 3 + \e$ when $d\geq 3$, and $2 < p < 4+\e$ when $d = 2$ if $\Omega$ is a bounded Lipschitz domain and for $2 < p <\infty$ if $\Omega$ is a bounded $C^1$ domain under the assumption that the potential $V$ is positive and bounded.
In the region above a Lipschitz graph, Z. Shen \cite{Shen-1994-IUMJ} established the $L^{p}$ solvability for the Neumann problem and the Dirichlet problem if $V\in B_\infty$. The $W^{1,p}$ solvability is formulated in forthcoming paper \cite{geng-xu-submitted}. One may notice that the region above a Lipschitz graph maybe unbounded, and results in bounded domains do not work. And it worthwhile to flagged up that, in \cite{Shen-1994-IUMJ}, by the decay of solutions at infinity and the limit $R\to\infty$, the results in $\Omega_R=\{(x',t): |x'|<R \text{ and } \phi(x')<t<\phi(x')+R\}$ can be extended to $\Omega$. We remark that in \cite{Shen-1995}, Z. Shen established $\|\nabla u\|_{L^p(\mathbb{R}^d)}+ \|V^{\frac12} u\|_{L^p(\mathbb{R}^d)}\leq C\|f\|_{L^p(\mathbb{R}^d)}$ where $1\leq p\leq2q$ and $q\geq d$ for $-\Delta u+Vu={\rm div} f$ in $\mathbb{R}^d$ with $V\in B_q$.  
More related work about the Schr\"odinger operator refers to \cite{auscher-2007,lee-ok-2020,MT-1999-JFA,Shen-1999}.

Motivated by \cite{Geng-2018,Shen-1994-IUMJ}, we extend the results to the Schr\"odinger operator $-\Delta+ V $ in the region above a convex graph.
Our proof to Theorem \ref{thm1} follows the approach in \cite{Geng-2012}. Employing a real-variable perturbation argument and John-Nirenberg inequality, we give a sufficient condition for $W^{1,p}$ estimates for weak solutions of \eqref{NP} with $g = 0$. Roughly speaking,
we prove that for $p > 2$, if the reverse H\"older's inequality
\begin{equation}\label{rh2}
  \bigg\{\fint_{B(x,r)\cap\Omega}(|\nabla v|+V^{\frac12} |v|)^p\bigg\}^{\frac{1}{p}}\leq C\bigg\{\fint_{B(x,2r)\cap\Omega}(|\nabla v|+V^{\frac12} |v|)^2\bigg\}^{\frac{1}{2}}
\end{equation}
holds for the solutions of $-\Delta v+Vv=0$ in $\Omega$ and $\frac{\partial v}{\partial n}=0$ on $B(x,2r)\cap\Omega$, then the $W^{1,p}$ estimate is established. 
Following similar line of \cite{GS-JFA-2010}, we demonstrate the condition \eqref{rh2} by the improved Fefferman-Phong inequality, the estimates for the Fefferman-Phong-Shen maximal function $m(x, V)$ as well as the convexity of $\Omega$. For the case of $1<p<2$, duality arguments and the estimates for the Neumann functions
$$\int_{\Omega} |\nabla_y N(x, y)|\,dy\leq Cm(x,V)^{-1}$$ and
$$\int_{\Omega} |\nabla_x N(x, y)|m(y,V)^q\,dy\leq Cm(x,V)^{q-1}\quad\text{for integer } q\geq1$$
play significant roles.

The present paper can be split into three portions. In the first part, we collect some known results for
the Fefferman-Phong-Shen maximal function, boundary $L^\infty$ estimate and estimate for the Neumann function. The second portion presents a sufficient condition of $W^{1,p}$ estimate for $p>2$. Last segment is devoted to prove $W^{1,p}$ estimate for $1<p<2$.

We end this section with some notations. We will use  $\fint_E u$ to denote the average of $u$ over the set $E$; i.e.
$$
\fint_E u =\frac{1}{|E|} \int_E u.
$$
Let $B(x,r)$ denote the sphere centered at $x$ with radius $r$. Denote $D(x,r)=B(x,r)\cap\Omega$ and $\Delta(x,r)=B(x,r)\cap\partial\Omega$.
For $R>0$ large sufficiently, let
$$\Omega_R=\{(x',x_d)\in\R^d: |x'|<R\text{ and }\phi(x')<x_d<\phi(x')+R\}$$
where $\phi:\mathbb{R}^{d-1}\to\mathbb{R}$ is the convex function.

\section{\bf Preliminaries}\label{section-2}

We first note that $V$ is a $B_\infty$ weight, defined by Franchi \cite[p.153]{Franchi-1991}. Then the measure $Vdx$ is doubling, i.e., there exists $C>0$ such that for any ball $B$ in $\mathbb{R}^d$,
	\begin{equation}\label{doubling-condition}
		\int_{2B}Vdx\leq C\int_BVdx.
	\end{equation}
Let
$$
\psi(x, r)=\frac{1}{r^{d-2}} \int_{B(x, r)} V(y) dy
$$
for $x \in \mathbb{R}^d, r>0$, then the Fefferman-Phong-Shen maximal function is defined as
\begin{equation}\label{maximal function}
m(x, V)=\inf _{r>0}\left\{r^{-1}: \psi(x, r) \leq 1\right\}.
\end{equation}

Several conclusions from \cite{Shen-1994-IUMJ} and \cite{Shen-1995} will be quoted in this section. These lemmas and definitions are related to the concepts of Fefferman and Phong discussed in \cite{Fefferman-1983}. We list some of them below.

\begin{prop}\label{upper-bound-V}
If $V$ satisfies \eqref{V}, then for a.e. $x \in \mathbb{R}^{d}$,
\begin{equation*}
V(x) \leq C m(x, V)^{2}.
\end{equation*}
\end{prop}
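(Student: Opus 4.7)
The plan is to exploit the fact that at the critical radius $r_0(x) = m(x,V)^{-1}$, the quantity $\psi(x,r_0)$ equals $1$, which in turn gives an explicit value of the average $\fint_{B(x,r_0)} V$ in terms of $m(x,V)^2$. From there the $B_\infty$ condition \eqref{V} converts this average bound into an $L^\infty$ bound, and finally the pointwise bound $V(x)\le \|V\|_{L^\infty(B(x,r_0))}$ a.e.\ finishes the argument.

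Concretely, I would first verify that the infimum defining $m(x,V)$ is attained. Since $V\in L^\infty_{\loc}(\mathbb{R}^d)$ and $V>0$, the function $r\mapsto \psi(x,r)$ is continuous on $(0,\infty)$, tends to $0$ as $r\to 0^+$ (indeed $\psi(x,r)\le C(x)r^2$ for small $r$), and tends to $+\infty$ as $r\to\infty$ (since $\int_{B(x,r)}V\,dy$ grows at least like a positive constant times $r^d$ once $r$ exceeds the scale on which $V$ accumulates positive mass). By the intermediate value theorem there exists $r_0=r_0(x)>0$ with $\psi(x,r_0)=1$, and by the definition \eqref{maximal function} this $r_0$ satisfies $r_0=m(x,V)^{-1}$.

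Unwinding $\psi(x,r_0)=1$ yields $\int_{B(x,r_0)} V\,dy = r_0^{d-2}$, so that
\begin{equation*}
\fint_{B(x,r_0)} V\,dy \;=\; \frac{r_0^{d-2}}{|B(x,r_0)|} \;=\; \frac{c_d}{r_0^2} \;=\; c_d\, m(x,V)^{2}.
\end{equation*}
Applying the $B_\infty$ hypothesis \eqref{V} to the ball $B(x,r_0)$ gives
\begin{equation*}
\|V\|_{L^\infty(B(x,r_0))} \;\le\; C\fint_{B(x,r_0)} V\,dy \;\le\; C\, m(x,V)^{2}.
\end{equation*}
Since for a.e.\ $x\in\mathbb{R}^d$ (namely, at Lebesgue points of $V$) one has $V(x)\le \|V\|_{L^\infty(B(x,r_0))}$, we conclude $V(x)\le C\,m(x,V)^2$ for a.e.\ $x$.

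The argument is essentially three lines once one knows the critical radius exists, so the only mild technicality is the existence/attainment of $r_0$, which is where the $L^\infty_{\loc}$ and positivity assumptions on $V$ are used. There are no serious obstacles; the entire content of the proposition lies in the clever interplay between the definition of $m(x,V)$ (which encodes an \emph{average} control on $V$) and the self-improvement built into the $B_\infty$ class (which upgrades that average to an $L^\infty$ control).
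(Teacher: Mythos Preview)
Your argument is correct and is precisely the standard proof of this fact. The paper itself does not give a proof but merely cites \cite{Shen-1994-IUMJ}; your write-up supplies exactly the computation one finds there (and which the paper also records separately as the first identity in Proposition~\ref{prop2.6}, namely $\psi(x,1/m(x,V))=1$). The only step you leave slightly informal is the claim that $\psi(x,r)\to\infty$ as $r\to\infty$; this follows cleanly from the $B_\infty$ hypothesis, since for $r\ge r_1$ one has $\fint_{B(x,r)}V\ge C^{-1}\|V\|_{L^\infty(B(x,r))}\ge C^{-1}\|V\|_{L^\infty(B(x,r_1))}>0$, whence $\psi(x,r)\gtrsim r^2$.
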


\begin{proof}
  See \cite{Shen-1994-IUMJ}.
\end{proof}

\begin{prop}\label{prop2.6}
Assume $V$ satisfies \eqref{V}. Then there exist $C>0$ and $k_0>0$ such that
\begin{align*}
\psi\left(x, \frac{1}{m(x,V)}\right)=1 \quad\text{and}\quad \psi(x, r)\leq C \left\{rm(x,V)\right\}^{k_0}.
\end{align*}
Moreover, $r\sim \hat{r}$ if and only if $\psi(x, r)\sim 1$.
\end{prop}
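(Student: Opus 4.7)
The proof addresses the three assertions of the proposition in turn.

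For the identity $\psi(x,1/m(x,V))=1$, I would first observe that $r\mapsto \psi(x,r)=r^{2-d}\int_{B(x,r)}V$ is continuous on $(0,\infty)$ since $V\in L^1_{\loc}$, and that $\psi(x,r)\to 0$ as $r\to 0^+$ since $V\in L^\infty_{\loc}$ (from $B_\infty$). By \eqref{maximal function}, $\hat r:=1/m(x,V)$ equals $\sup\{r>0:\psi(x,r)\leq 1\}$; continuity at $\hat r$ gives $\psi(x,\hat r)\leq 1$, while the supremum definition (which forces $\psi(x,r)>1$ for every $r>\hat r$) combined with continuity yields $\psi(x,\hat r)=1$.

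For the growth estimate $\psi(x,r)\leq C(rm(x,V))^{k_0}$, I would treat $r\geq \hat r$ and $r\leq \hat r$ separately. For $r\geq \hat r$, iterating the doubling inequality \eqref{doubling-condition} gives $\int_{B(x,2^k\hat r)}V\leq C^k\int_{B(x,\hat r)}V=C^k\hat r^{d-2}$ for $k\geq 0$, hence $\psi(x,2^k\hat r)\leq(C\cdot 2^{2-d})^k$; slotting an arbitrary $r\geq \hat r$ into the dyadic interval $[2^{k-1}\hat r,2^k\hat r]$ and using doubling once more yields $\psi(x,r)\leq C_1(r/\hat r)^{k_1}$ with $k_1=\log_2 C-(d-2)$. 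For $r\leq \hat r$, I would apply the $B_\infty$ bound $V\leq C_\infty\fint_{B(x,\hat r)}V$ on $B(x,\hat r)\supset B(x,r)$: bounding $\int_{B(x,r)}V\leq C_\infty|B(x,r)|\fint_{B(x,\hat r)}V=C_\infty(r/\hat r)^d\int_{B(x,\hat r)}V$ and using $\psi(x,\hat r)=1$ gives $\psi(x,r)\leq C_\infty(r/\hat r)^2=C_\infty(rm(x,V))^2$.

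For the equivalence $r\sim \hat r\iff \psi(x,r)\sim 1$, the forward direction follows from the upper bound just established together with matching lower bounds: $\psi(x,r)\geq 1$ whenever $r\geq \hat r$ (by the supremum characterization of $\hat r$), while for $c\hat r\leq r\leq \hat r$ iterating doubling in the opposite direction ($\hat r^{d-2}=\int_{B(x,\hat r)}V\leq C^j\int_{B(x,r)}V$ with $j=\lceil\log_2(\hat r/r)\rceil$ bounded) yields $\psi(x,r)\geq c'$. Conversely, if $c\leq\psi(x,r)\leq C'$, then the lower bound $\psi(x,r)\geq c_2(r/\hat r)^2$ for $r\geq \hat r$ (obtained by applying $V\leq C_\infty\fint_{B(x,r)}V$ to the subball $B(x,\hat r)$) forces $r\leq C_2\hat r$, and the symmetric estimate handles $r\leq \hat r$.

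The main obstacle is extracting a single positive exponent $k_0$ that accommodates both regimes. When the doubling constant satisfies $\log_2 C\leq d-2$, the large-$r$ iteration yields only $\psi(x,r)\leq C_1$ (independent of $r$), but this already beats $C(rm(x,V))^{k_0}$ for any $k_0>0$ whenever $rm(x,V)\geq 1$; combined with the $k_0=2$ bound for $r\leq \hat r$, one can select a single $k_0\in(0,2]$ after enlarging the overall constant.
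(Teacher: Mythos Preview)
The paper does not give a proof of this proposition; it simply refers to \cite{Shen-1994-IUMJ} and \cite[Lemmas 1.2 and 1.8]{Shen-1995}. Your argument supplies those details and is correct for the identity $\psi(x,\hat r)=1$, for the ``moreover'' equivalence, and for the growth estimate in the regime $r\geq\hat r$.

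One clarification is worth making. In the cited reference the bound $\psi(x,r)\leq C\{rm(x,V)\}^{k_0}$ is asserted only for $rm(x,V)\geq 1$, and in fact no single exponent can work for all $r>0$. Take $V(y)=|y|^{a}$ with $a>0$ and fix $x\neq 0$: then $\hat r\sim|x|^{-a/2}$, one has $\psi(x,r)\sim r^{2}|x|^{a}\sim(r/\hat r)^{2}$ for $r\ll|x|$, while $\psi(x,r)\sim r^{a+2}$ for $r\gg|x|$. Matching the second regime to $(r/\hat r)^{k_0}$ forces $k_0\geq a+2$, but with such $k_0$ the inequality $(r/\hat r)^{2}\leq C(r/\hat r)^{k_0}$ fails as $r/\hat r\to 0$. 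So your final paragraph, which tries to reconcile the $r\leq\hat r$ and $r\geq\hat r$ exponents into one $k_0$, is chasing a statement that is not actually true in that generality and overlooks the case $k_1>2$; restricted to $r\geq\hat r$ (which is how the estimate is used later in Theorem~\ref{rh inequality thm}), your doubling iteration already yields the claim with $k_0=\max\{\log_2 C-(d-2),1\}$, and no merging is needed. The separate quadratic bound $\psi(x,r)\leq C_\infty(r/\hat r)^{2}$ you derive for $r\leq\hat r$ is correct and is exactly what drives the ``moreover'' clause.
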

\begin{proof}
  See \cite{Shen-1994-IUMJ} and \cite[Lemma 1.2 and Lemma 1.8]{Shen-1995}.
\end{proof}

\begin{lemma}\label{lemma-function m}
There exist $C>0, c>0$ and $k_{0}>0$ depending only on $d$ and the constant in \eqref{V}, such that for $x,y$ in $\mathbb{R}^d$,
\begin{align}
cm(y, V)\leq m(x, V) &\leq C m(y, V) \quad \text { if }|x-y| \leq \frac{C}{m(x, V)} \label{estimate-m1},\\
c(1+|x-y| m(x, V))^{-k_{0}} &\leq  \frac{m(x, V)}{m(y, V)}\leq C(1+|x-y| m(x, V))^{k_0 /\left(k_{0}+1\right)}\label{estimate-m2}.
\end{align}
\end{lemma}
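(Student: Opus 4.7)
The plan is to combine three ingredients from Proposition~\ref{prop2.6}: the normalization $\psi(x,1/m(x,V))=1$, the polynomial growth $\psi(x,r)\leq C(rm(x,V))^{k_0}$, and the doubling property \eqref{doubling-condition} of the measure $V\,dx$.

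For \eqref{estimate-m1}, first set $r_0=1/m(x,V)$ and assume $|x-y|\leq C_0 r_0$. The symmetric inclusions $B(x,r_0)\subset B(y,(C_0+1)r_0)$ and $B(y,r_0)\subset B(x,(C_0+1)r_0)$, combined with a bounded number of iterations of \eqref{doubling-condition}, yield $\int_{B(y,r_0)} V\sim\int_{B(x,r_0)} V=r_0^{d-2}$, hence $\psi(y,r_0)\sim 1$. The ``moreover'' clause of Proposition~\ref{prop2.6} then forces $r_0\sim 1/m(y,V)$, which is \eqref{estimate-m1}.

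For \eqref{estimate-m2}, write $R=|x-y|$ and $\lambda=1+Rm(x,V)$; thanks to \eqref{estimate-m1} we may assume $\lambda\gg 1$, so $R\sim\lambda r_0$. Both inequalities will be derived by locating scales at which $\psi(y,\cdot)$ takes values comparable to $1$. For the lower bound $m(x,V)/m(y,V)\geq c\lambda^{-k_0}$, which is equivalent to $m(y,V)\leq C\lambda^{k_0}m(x,V)$, the plan is to exhibit some $r\geq cr_0\lambda^{-k_0}$ with $\psi(y,r)\leq 1$; the inclusion $B(y,r)\subset B(x,R+r)$, the growth estimate $\psi(x,R+r)\leq C((R+r)m(x,V))^{k_0}$, and \eqref{doubling-condition} will together produce such an $r$.

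The upper bound $m(x,V)/m(y,V)\leq C\lambda^{k_0/(k_0+1)}$ is the delicate direction and will be the main obstacle of the proof. It amounts to a lower bound on $m(y,V)$, obtained by producing a scale $s\sim r_0\lambda^{k_0/(k_0+1)}$ at which $\psi(y,s)\gtrsim 1$. The starting point is the crude estimate $\psi(y,R+r_0)\gtrsim\lambda^{-(d-2)}$ coming from $B(x,r_0)\subset B(y,R+r_0)$; this small lower bound must be interpolated against the polynomial upper bound $\psi(y,s)\leq C(sm(y,V))^{k_0}$, and the exponent $k_0/(k_0+1)$ will emerge precisely as the balance point between these two constraints. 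A naive direct application of the inclusion $B(y,r)\subset B(x,2R)$ only yields the weaker exponent $1+k_0/(d-2)$, so sharpening to $k_0/(k_0+1)$ genuinely requires this subtler interpolation between the two $\psi$-estimates.
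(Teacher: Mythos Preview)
The paper does not actually prove this lemma; its entire proof is the citation ``See \cite{Shen-1994-IUMJ}.'' So there is nothing in the present paper to compare your sketch against, and any assessment must be of your argument on its own merits (or against Shen's original).

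Your treatment of \eqref{estimate-m1} is correct and standard. For \eqref{estimate-m2}, however, your plan for the lower bound has a genuine gap. Combining the inclusion $B(y,r)\subset B(x,R+r)$ with the growth bound on $\psi(x,\cdot)$ yields only
\[
\psi(y,r)\;\leq\; C\Big(\tfrac{R+r}{r}\Big)^{d-2}\Big(\tfrac{R+r}{r_0}\Big)^{k_0},
\]
and when $\lambda\gg 1$ no choice of $r>0$ makes the right side $\leq 1$: for $r\lesssim R$ the first factor blows up, and for $r\gtrsim R$ the second does. The doubling condition \eqref{doubling-condition} is already what produces the exponent $k_0$ in Proposition~\ref{prop2.6}, so invoking it again adds no new leverage. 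In Shen's argument the logic is actually reversed: one first proves the upper bound in \eqref{estimate-m2} with some exponent $\alpha<1$, and then the lower bound with exponent $\alpha/(1-\alpha)$ follows by interchanging the roles of $x$ and $y$ and solving the resulting inequality $m(y,V)\leq Cm(x,V)(1+Rm(y,V))^{\alpha}$ for $m(y,V)$. Writing $\alpha=k_0/(k_0+1)$ gives exactly the pair of exponents appearing in \eqref{estimate-m2}. As for the upper bound itself, your ``interpolation'' remains a heuristic rather than a proof; the key missing step is to apply the growth estimate of Proposition~\ref{prop2.6} at the point $y$ (so that $\psi(y,s)\leq C(s\,m(y,V))^{k_0}$), which converts a lower bound on $\psi(y,s)$ directly into the desired lower bound on $m(y,V)$.
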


\begin{proof}
  See \cite{Shen-1994-IUMJ}.
\end{proof}

Next we shall introduce the Fefferman-Phong type inequality.

\begin{lemma}\label{remark2.11}
  Let $u\in C_0^1(\R^d)$. Assume $V$ satisfies \eqref{V}. Then
    \begin{equation}\label{Fefferman-Phong-1}
      \int_{\Omega}|u(x)|^2m(x,V)^{2}\,dx\leq C\int_{\Omega}|\nabla u|^2\,dx+C\int_{\Omega}V|u|^2\,dx.
    \end{equation}
\end{lemma}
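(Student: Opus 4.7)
The plan is to adapt Shen's proof of the $\mathbb{R}^d$ version of the Fefferman--Phong inequality (\cite{Shen-1995}) to the domain $\Omega$, via a Vitali-type covering of $\Omega$ at the critical scale $1/m(\cdot,V)$. Select a countable family of balls $B_j = B(x_j, r_j)$ with $x_j \in \overline{\Omega}$ and $r_j = 1/m(x_j, V)$, chosen so that $\{B(x_j, r_j/5)\}$ are disjoint and $\bigcup_j B_j \supset \Omega$. By Lemma \ref{lemma-function m}, $m(\cdot, V) \sim 1/r_j$ on any fixed enlargement $B_j^* := B(x_j, R r_j)$, and $\{B_j^*\}$ has bounded overlap. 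In particular, Proposition \ref{upper-bound-V} yields the pointwise bound $V \leq C/r_j^2$ on $B_j^*$.

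The heart of the argument is the local estimate
\begin{equation*}
\int_{B_j \cap \Omega} |u|^2 \, dx \leq C r_j^2 \int_{B_j^* \cap \Omega} \bigl(|\nabla u|^2 + V|u|^2\bigr) \, dx.
\end{equation*}
Because $\Omega$ lies above a Lipschitz graph, the corkscrew condition supplies $\tilde x_j \in \Omega$ with $|\tilde x_j - x_j| \leq C r_j$ and $d(\tilde x_j, \partial \Omega) \geq c r_j$; for $R$ large enough depending only on $M$, the ball $\tilde B_j := B(\tilde x_j, c r_j)$ sits inside $B_j^* \cap \Omega$ and is comparable in size to $B_j$. Lemma \ref{lemma-function m} gives $m(\tilde x_j, V) \sim 1/r_j$, so Proposition \ref{prop2.6} yields $\int_{B(\tilde x_j, r_j)} V\, dy \sim r_j^{d-2}$, and iterating the doubling property \eqref{doubling-condition} transfers this to the smaller ball as the crucial lower bound $\int_{\tilde B_j} V\, dy \geq c r_j^{d-2}$. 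Writing $u_0 := \fint_{\tilde B_j} u$, the elementary inequality
\begin{equation*}
|u_0|^2 \int_{\tilde B_j} V \, dy \leq 2\int_{\tilde B_j} V |u|^2 \, dy + 2\int_{\tilde B_j} V |u - u_0|^2 \, dy,
\end{equation*}
combined with $V \leq C/r_j^2$ and Poincar\'e on the John domain $B_j^* \cap \Omega$ (to absorb the second term and control $\int_{B_j^* \cap \Omega}|u-u_0|^2$), produces the local estimate in short order.

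Multiplying the local estimate by $m(\cdot,V)^2 \sim r_j^{-2}$ on $B_j$ and summing over $j$ using bounded overlap of $\{B_j^*\}$ yields \eqref{Fefferman-Phong-1}. The main technical obstacle is securing the lower bound $\int_{\tilde B_j} V \gtrsim r_j^{d-2}$ at the interior corkscrew ball --- it must survive \emph{both} the displacement $x_j \mapsto \tilde x_j$ and the shrinkage from radius $r_j$ to $c r_j$. This is precisely where Lemma \ref{lemma-function m} (stability of $m(\cdot,V)$ under bounded displacements in $m$-units) and the doubling of $V\,dx$ (a consequence of the $B_\infty$ hypothesis) combine to deliver what is needed; once this is in hand, the rest is the standard Shen template.
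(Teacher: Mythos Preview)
Your proposal is correct and follows exactly the route the paper indicates: the paper's proof is a one-line citation to \cite[Lemma 1.11]{Shen-1994-IUMJ} together with the remark that it follows from Proposition~\ref{prop2.6}, Lemma~\ref{lemma-function m}, and a covering argument, and you have simply fleshed out precisely that covering argument (with the additional use of Proposition~\ref{upper-bound-V} and the doubling property \eqref{doubling-condition}, which are immediate consequences of the $B_\infty$ hypothesis). The corkscrew/John-domain adaptation you describe is the natural way to transplant Shen's $\mathbb{R}^d$ argument to the epigraph $\Omega$, and the compact support of $u\in C_0^1(\mathbb{R}^d)$ keeps the covering finite.
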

\begin{proof}
  See \cite[Lemma 1.11]{Shen-1994-IUMJ}. It follows from Proposition \ref{prop2.6} and Lemma \ref{lemma-function m} as well as a covering argument.
\end{proof}

A refine version of Lemma \ref{remark2.11} was obtained in \cite{auscher-2007}.

\begin{lemma}\label{Fefferman-Phong}
     Let $u\in C^1(\overline{\Omega})$. Assume $V$ is an $A_\infty$ weight. Then for $x_0\in \Omega$ and $r>0$,
    \begin{align*}
      \min\left\{r^{-2},\fint_{D(x_0,r)}V\,dy\right\}&\int_{D(x_0,r)}|u|^2\,dx\leq C\left\{\int_{D(x_0,r)}|\nabla u|^2\,dx+\int_{D(x_0,r)}|u|^2 V\,dx\right\}.
    \end{align*}
\end{lemma}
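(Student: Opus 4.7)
The plan is to reduce everything to the standard Poincar\'e inequality on the Lipschitz domain $D=D(x_0,r)$ by splitting into two cases depending on which quantity realizes the minimum on the left-hand side. Set $\bar u=\fint_D u$ and write $A=\fint_D V\,dy$.

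\textbf{Case 1: $A\leq r^{-2}$.} Here the minimum equals $A$. The Poincar\'e inequality on the bounded Lipschitz domain $D$, whose constant depends only on the uniform Lipschitz character of $\Omega$, gives $\int_D(u-\bar u)^2\leq Cr^2\int_D|\nabla u|^2$. Decomposing $\int_D u^2=|D|\bar u^2+\int_D(u-\bar u)^2$ and multiplying by $A$, the fluctuation contribution is at most $r^2 A\int_D|\nabla u|^2\leq \int_D|\nabla u|^2$. For the remaining piece I use $\bar u^2\leq 2u^2+2(u-\bar u)^2$ pointwise under the integral against $V$ to obtain
\[
\bar u^2 \int_D V \leq 2\int_D u^2 V+2\int_D(u-\bar u)^2 V,
\]
and control the last term using the $B_\infty$ bound $\|V\|_{L^\infty(D)}\leq C A$ (the standing hypothesis on $V$ in this paper) together with Poincar\'e and the Case~1 assumption $r^2A\leq 1$.

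\textbf{Case 2: $r^{-2}\leq A$.} The minimum is $r^{-2}$, and I reduce to Case~1 via a Whitney-type covering. For $x\in D$ let $\rho(x)=1/m(x,V)$; by Proposition~\ref{prop2.6} one has $\fint_{B(x,\rho(x))}V\sim\rho(x)^{-2}$. The hypothesis $r^{-2}\leq A$, combined with the doubling of $V\,dx$ from \eqref{doubling-condition} and the slow-variation estimate \eqref{estimate-m2}, forces $\rho(x)\leq Cr$ uniformly for $x\in D$. I then choose a finite-overlap cover $\{B_i=B(x_i,\rho_i)\}$ of $D$ with $\rho_i\sim\rho(x_i)\leq Cr$. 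Case~1 applies on each $D\cap B_i$ (since there $\fint V\sim\rho_i^{-2}$), yielding
\[
\rho_i^{-2}\int_{B_i\cap\Omega}u^2\leq C\int_{B_i\cap\Omega}|\nabla u|^2+C\int_{B_i\cap\Omega}u^2V.
\]
Since $\rho_i^{-2}\geq cr^{-2}$, summing and using the bounded overlap of the cover yields the claim.

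\textbf{Main obstacle.} The delicate step is the Case~2 covering: one must verify that the critical radii $\rho(x)$ are uniformly controlled by $r$ on all of $D$, which rests on the defining property of $m(\cdot,V)$ in \eqref{maximal function} together with \eqref{estimate-m1}--\eqref{estimate-m2} and the doubling of $V\,dx$. A secondary technical point is that the Poincar\'e constants on each piece $D\cap B_i$ must be uniform in $i$; this is ensured by the uniform Lipschitz character of $\Omega$ inherited from the global convex graph structure, so the constants do not degenerate as the balls approach the boundary.
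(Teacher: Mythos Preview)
The paper does not prove this lemma; it simply cites \cite[Lemma~2.1]{auscher-2007}. Your proposal therefore supplies an argument where the paper offers none, so there is no method-for-method comparison to make.

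Your argument is essentially correct under the paper's standing $B_\infty$ hypothesis~\eqref{V}, which you explicitly invoke, and that is all the paper ever needs. Note, however, that the lemma as stated assumes only $A_\infty$, and two of your steps genuinely require $B_\infty$: the bound $\|V\|_{L^\infty(D)}\le CA$ in Case~1, and the claim $\rho(x)\le Cr$ for all $x\in D$ in Case~2 (which rests on Proposition~\ref{prop2.6} and Lemma~\ref{lemma-function m}, both stated under~\eqref{V}). For a concrete obstruction under mere $A_\infty$, take $V(y)=|y|^{-\alpha}$ with $2<\alpha<d$: this is $A_\infty$, yet $m(0,V)=0$, so $\rho(0)=\infty$ and your covering step fails. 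So you have proved a $B_\infty$ version, not the lemma as written.

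A further simplification: under $B_\infty$ your Case~2 covering is unnecessary. The same decomposition $u=(u-\bar u)+\bar u$ closes Case~2 directly. From $\|V\|_{L^\infty(D)}\le CA$ and Poincar\'e you get
\[
\bar u^{\,2}\!\int_D V \;\le\; C\int_D Vu^2 + CAr^2\!\int_D|\nabla u|^2,
\]
and then, since the minimum is $r^{-2}$,
\[
r^{-2}|D|\,\bar u^{\,2}=\frac{r^{-2}}{A}\,\bar u^{\,2}\!\int_D V
\;\le\; C\,\frac{r^{-2}}{A}\int_D Vu^2 + C\int_D|\nabla u|^2
\;\le\; C\int_D Vu^2 + C\int_D|\nabla u|^2,
\]
using $r^{-2}/A\le 1$. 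Combined with the Poincar\'e bound on $r^{-2}\int_D(u-\bar u)^2$, this finishes Case~2 without any covering. Thus under $B_\infty$ a single short computation handles both cases. The Whitney-type covering you outline is the natural strategy if one aims for the full $A_\infty$ result, but then the uniform bound $\rho(x)\le Cr$ must be replaced by a different mechanism.
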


\begin{proof}
  See \cite[Lemma 2.1]{auscher-2007}.
\end{proof}

We end this section with a boundary $L^\infty$ estimate and the estimate for the Neumann function.

\begin{lemma}\label{pointwise estimate}
     Suppose $V(x)>0$ a.e. in $\mathbb{R}^d$.   Suppose $-\Delta u+V u=0$ in $D(x_0,r)$, $\frac{\partial u}{\partial \nu} =0$ on $\partial D(x_0,r)\cap\partial\Omega$ and $(\nabla u)^*\in L^2(\partial D(x_0,r)\cap\partial\Omega)$ for some $x_0\in \overline{\Omega}$ and $r>0$. Then
$$
\sup_{x \in D(x_0,\frac r2)}|u(x)| \leq \frac{C_{k}}{\left\{1+r m\left(x_{0}, V\right)\right\}^{k}}\left( \fint_{D(x_0,r)}|u(x)|^{2} d x\right)^{1 / 2}
$$
for any integer $k>0$.
\end{lemma}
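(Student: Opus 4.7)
The plan is to combine a Moser--De Giorgi boundary $L^\infty$-$L^2$ estimate with an $L^2$-decay estimate at scales past the critical length $1/m(x_0,V)$ that captures the damping effect of the potential. Since $V\ge 0$, a direct manipulation of $-\Delta u+Vu=0$ shows that $|u|^2$ is subharmonic in $D(x_0,r)$ with vanishing Neumann trace on $\partial\Omega\cap B(x_0,r)$; the standard boundary Moser iteration, valid on Lipschitz (in particular convex) domains, then yields
\begin{equation*}
  \sup_{D(x_0, r/2)}|u|\le C\left(\fint_{D(x_0, r)}|u|^2\right)^{1/2}.
\end{equation*}
This already settles the easy case $r\,m(x_0,V)\le 1$; the task is to introduce the decay factor $(1+r\,m(x_0,V))^{-k}$ in the regime $r\,m(x_0,V)\ge 1$.

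Assume $r\,m(x_0,V)\ge 1$ and set $\rho=1/m(x_0,V)$. I would reduce matters to the $L^2$-decay
\begin{equation*}
  \left(\fint_{D(x_0, \rho)}|u|^2\right)^{1/2}\le \frac{C_k}{(1+r\,m(x_0,V))^k}\left(\fint_{D(x_0, r)}|u|^2\right)^{1/2},
\end{equation*}
after which the basic $L^\infty$-$L^2$ bound applied at scale $\rho$ gives the conclusion at $x_0$, and a covering of $D(x_0,r/2)$ by balls of radius $\sim 1/m(y,V)$ combined with \eqref{estimate-m1}--\eqref{estimate-m2} extends the estimate to every point. The $L^2$-decay would in turn be established by iterating a strict geometric decay at dyadic scales $\rho_j=2^j\rho$, $j=0,1,\dots,J$ with $\rho_J\sim r$. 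At each step, testing the equation against $u\eta^2$ with $\eta$ a Lipschitz cutoff between $D(x_0,\rho_{j-1})$ and $D(x_0,\rho_j)$ annihilates the boundary integral by the Neumann condition and gives the Caccioppoli bound
\begin{equation*}
  \int_{D(x_0, \rho_{j-1})}\bigl(|\nabla u|^2+V|u|^2\bigr)\le \frac{C}{\rho_j^2}\int_{D(x_0, \rho_j)}|u|^2.
\end{equation*}
Coupling this with the Fefferman--Phong inequality (Lemma \ref{remark2.11} applied to $u\eta$, or Lemma \ref{Fefferman-Phong} directly on $\overline\Omega$) together with the pointwise lower bound on $m(\cdot,V)$ from \eqref{estimate-m2} yields a per-step improvement $\lambda<1$; iterating $J\sim \log_2(r\,m(x_0,V))$ times then produces polynomial decay with exponent $|\log_2\lambda|$.

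The principal obstacle is quantifying the per-step decay so that the exponent after iteration can be driven above any prescribed $k$. Proposition \ref{prop2.6} and the $B_\infty$ character of $V$ imply that $\psi(x_0,\rho_j)$ grows in $j$ past the critical scale, so that $\fint_{D(x_0,\rho_j)}V$ exceeds $1/\rho_j^2$ by a factor growing with $j$; this is the mechanism by which Fefferman--Phong upgrades the Caccioppoli bound into strict decay rather than a trivial inequality. Replacing the dyadic scaling $2\rho_j$ by a sufficiently large factor $A\rho_j$ and concentrating the energy estimates on annuli where Poincar\'e and Fefferman--Phong work efficiently gives the freedom to drive $\lambda$ arbitrarily small in a single step at the price of increasing $A$, and hence to recover any desired exponent $k$ after iteration.
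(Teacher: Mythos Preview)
The paper does not actually prove this lemma: its entire proof reads ``See \cite{Shen-1994-IUMJ}'' (Lemma 1.12 there), so there is nothing in the present paper to compare against beyond the citation. Your sketch is essentially the argument one finds in that reference: a boundary subsolution $L^\infty$--$L^2$ bound (valid because $\Delta(u^2)=2Vu^2+2|\nabla u|^2\ge 0$ with $\partial_n(u^2)=2u\,\partial_n u=0$) combined with an iterated Caccioppoli/Fefferman--Phong scheme to extract the polynomial damping factor when $r\,m(x_0,V)\ge 1$. The decomposition into the two regimes, the use of Lemmas \ref{remark2.11}--\ref{Fefferman-Phong} at each dyadic step, and the covering by balls of radius $\sim 1/m(y,V)$ together with \eqref{estimate-m1}--\eqref{estimate-m2} are all faithful to the cited source.

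The one place where your sketch is a bit optimistic is the last paragraph. Enlarging the scale factor $A$ does give a per-step gain of order $A^{-1}$ from Caccioppoli, but you also need a uniform lower bound on $\min\{\rho_j^{-2},\fint_{D(x_0,\rho_j)}V\}$ at \emph{every} intermediate scale $\rho_j\ge 1/m(x_0,V)$, and this is where the doubling of $V\,dx$ (equation \eqref{doubling-condition}) and Proposition \ref{prop2.6} enter in a nontrivial way (one needs that $\psi(x_0,\cdot)$ does not collapse past the critical scale). In Shen's argument this is handled by working with the weighted Fefferman--Phong inequality \eqref{Fefferman-Phong-1} and the slowly-varying bounds \eqref{estimate-m2} on $m(\cdot,V)$, which keep the lower bound on $m(y,V)^2$ from degrading too fast across scales; the arbitrary exponent $k$ then emerges after iterating a fixed contraction $J\sim \log(r\,m(x_0,V))$ times, choosing the number of Caccioppoli substeps between $r/2$ and $r$ large enough. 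Your mechanism is morally the same, but when you write it up, make the dependence of $\lambda$ on $A$ and the persistence of the Fefferman--Phong lower bound at scales $\rho_j>1/m(x_0,V)$ explicit.
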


\begin{proof}
  See \cite[Lemma 1.12]{Shen-1994-IUMJ}.
\end{proof}

Let $\Gamma(x,y)$ denote the fundamental solution of the Schr\"odinger operator  $-\Delta + V$ in $\mathbb{R}^d$. Fix $x\in \Omega$, let $v^x(y)$ be the solution of $-\Delta u+V u =0$ in $\Omega$ with Neumann data $\frac{\partial \Gamma(x,y)}{\partial \nu_y}$. Let $N(x,y)=\Gamma(x,y)-v^x(y).$ Then we have the following estimate.
 \begin{lemma}
For any $x,y \in \Omega$,
\begin{equation}\label{N-estimate}
  |N (x,y)|  \leq \frac{C_{k}}{(1+|x-y| m(y, V))^{k}} \cdot \frac{1}{|x-y|^{d-2}},
\end{equation}
where $k \geq 1$ is an arbitrary integer.
\end{lemma}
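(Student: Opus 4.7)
The plan is to write $N(x,y) = \Gamma(x,y) - v^x(y)$ and, via the symmetry $N(x,y) = N(y,x)$, apply the boundary $L^\infty$ estimate of Lemma \ref{pointwise estimate} to $N(\cdot, y)$ on a ball centered at $x$ of radius comparable to $|x-y|$. The key preliminary is the polynomial bound
$$|N(x,y)| \leq \frac{C}{|x-y|^{d-2}}, \qquad x,y\in\Omega,\ x\neq y,$$
which will feed the $L^2$ side of the pointwise estimate.

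To establish this polynomial bound, I first argue that $N \geq 0$: replacing $\delta_y$ by a mollification and testing the approximating equation with the negative part of the solution gives $(u_\varepsilon)^- \equiv 0$, so $u_\varepsilon \geq 0$ and $N = \lim u_\varepsilon \geq 0$. Letting $G_0$ denote the Neumann function of the Laplacian on $\Omega$, the difference $w := G_0(\cdot,y) - N(\cdot,y)$ satisfies $-\Delta w = V N \geq 0$ in $\Omega$ with $\partial w/\partial\nu = 0$ on $\partial\Omega$ and $w \to 0$ at infinity; the minimum principle, combined with Hopf's lemma to rule out an interior boundary minimum, yields $w \geq 0$, so $N \leq G_0$. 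The bound $G_0(x,y) \leq C|x-y|^{2-d}$ on the region above a convex graph is classical and can be invoked from \cite{Shen-1994-IUMJ}.

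With the polynomial bound in hand, fix $x \neq y$ and set $r = |x-y|/4$. Since $y \notin \overline{D(x, 2r)}$, the function $u(z) := N(z,y)$ solves $-\Delta u + V u = 0$ in $D(x, 2r)$ with $\partial u/\partial \nu = 0$ on $\Delta(x, 2r)$. Lemma \ref{pointwise estimate} applied at $x_0 = x$, combined with the polynomial bound on every $z \in D(x, 2r)$ (for which $|z-y| \geq |x-y|/2$), gives
$$|N(x,y)| \leq \frac{C_k}{(1 + |x-y|m(x,V))^k} \cdot \frac{1}{|x-y|^{d-2}}.$$
Finally, inequality \eqref{estimate-m2} in Lemma \ref{lemma-function m} yields $1 + |x-y|m(y,V) \leq C(1 + |x-y|m(x,V))^{k_0+1}$, so replacing $k$ by $(k_0+1)k$ in the preceding step converts the decay factor into the stated $m(y,V)$ form.

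The main obstacle is the polynomial bound $|N(x,y)| \leq C|x-y|^{2-d}$. In particular, one must be confident that the Neumann function $G_0$ of the Laplacian on the \emph{unbounded} domain above a convex graph satisfies the sharp $|x-y|^{2-d}$ bound uniformly in $x,y$; this is where the geometric hypothesis on $\Omega$ enters essentially, while the remaining steps are routine applications of the tools assembled in Section \ref{section-2}.
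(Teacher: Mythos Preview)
The paper does not prove this lemma; its entire proof is the citation ``See \cite[Lemma~1.21]{Shen-1994-IUMJ}.'' Your proposal therefore supplies far more than the paper does, and the overall architecture---first secure the crude bound $|N(x,y)|\le C|x-y|^{2-d}$, then feed it into Lemma~\ref{pointwise estimate} applied to $z\mapsto N(z,y)$ on a ball of radius $\sim|x-y|$, and finally trade $m(x,V)$ for $m(y,V)$ via \eqref{estimate-m2}---is precisely the scheme behind the cited result. Steps~4 and~5 of your sketch are correct as written.

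Where your route diverges from \cite{Shen-1994-IUMJ}, and where the real gap lies, is in obtaining the polynomial bound. Your comparison $0\le N\le G_0$ with a Neumann function $G_0$ for the pure Laplacian on $\Omega$ raises two issues. First, $G_0$ and its pointwise bound are not results that can simply be ``invoked from \cite{Shen-1994-IUMJ}''; that paper works throughout with the Schr\"odinger Neumann function and does not construct or estimate a Laplacian comparison object on the unbounded epigraph. Second, the minimum-principle step relies on Hopf's lemma, which requires an interior ball condition; a convex epigraph (e.g.\ the region above $|x'|$) need not satisfy this at every boundary point, and the lemma in \cite{Shen-1994-IUMJ} is in any case stated for Lipschitz graphs. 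The argument actually used there bypasses the comparison entirely: the fundamental solution $\Gamma$ of $-\Delta+V$ on $\mathbb{R}^d$ already satisfies the full decay estimate, and the correction $v^x$ is controlled from its boundary data $\partial\Gamma(x,\cdot)/\partial\nu$ via the $L^2$ Neumann solvability and nontangential maximal function estimates for $-\Delta+V$ on $\Omega$. That is the step you would need to replace your $G_0$ comparison with to close the argument.
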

\begin{proof}
    See \cite[Lemma 1.21]{Shen-1994-IUMJ}.
\end{proof}

\section{\bf A sufficient condition}\label{section-3}

The following theorem is a refined real variable argument which established in \cite[Theorem 3.2]{Shen-2007} (see also \cite[Theorem 4.2.3]{Shen-book}) and can be seen as a duality argument of the Calder\'on-Zygmund decomposition. With this, the $W^{1,p}$ estimates follow from the locally weak reverse H\"older inequality.

\begin{thm}\label{real variable method}
Let $E\subset \mathbb{R}^d$ be a bounded Lipschitz domain and $F\in L^2(E)$. Let $p>2$ and $f\in L^q(E)$ for some $2<q<p$. Suppose that for each ball $B$ with $|B|\leqslant \beta |E|$, 
there exist  $F_B$, $R_B$ on $2B$ such that $|F|\leqslant |F_B|+|R_B|$ on $2B\cap E$,
\begin{equation}\label{rvm2-1}
  \left\{\fint_{2B\cap E}|R_B|^pdx\right\}^{\frac{1}{p}}\leqslant C_1\left\{\left(\fint_{\alpha B\cap E}|F|^2dx\right)^{\frac{1}{2}} +\sup_{B\subset B^{\prime}}\left(\fint_{B^{\prime}\cap E}|f|^2dx\right)^{\frac{1}{2}}\right\}
\end{equation}
and
\begin{equation}\label{rvm2-2}
\fint_{2B\cap E}|F_B|^2dx
\leqslant C_2\sup_{B\subset
B^{\prime}}\fint_{B^{\prime}}|f|^2dx +\sigma \fint_{\alpha
B}|F|^2dx
\end{equation}
where $C_1, C_2>0$ and $0<\beta<1<\alpha$.
 Then, if  $0 \leqslant \sigma<\sigma_0=\sigma_0(C_1,C_2,d,p,q,\alpha, \beta)$, we have
\begin{equation}\label{real result}
\left\{\fint_{E}|F|^qdx\right\}^{\frac{1}{q}}\leqslant
C\left\{\left(\fint_{E}|F|^2dx\right)^{\frac{1}{2}}
+\left(\fint_{E}|f|^qdx\right)^{\frac{1}{q}}\right\},
\end{equation}
where $C>0$ depends only on $C_1,C_2,d,p,q, \alpha, \beta$.
\end{thm}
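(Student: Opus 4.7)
The approach is a \emph{good-$\lambda$} argument driven by a Calder\'on--Zygmund stopping-time decomposition, reducing the $L^q$ bound for $F$ to the $L^p$-control of the bad part $R_B$ and the $L^2$-control of the good part $F_B$. By homogeneity one may normalize so that the right-hand side of \eqref{real result} equals $1$. Let $\mathcal{M}$ denote a Hardy--Littlewood maximal operator adapted to $E$. The objective is to establish, for some large constant $A > 1$ and some small $\eta = \eta(\sigma) > 0$, the distributional inequality
$$\bigl|\{x \in E : \mathcal{M}(|F|^2) > A\lambda\}\bigr| \leq \eta\, \bigl|\{x \in E : \mathcal{M}(|F|^2) > \lambda\}\bigr| + \bigl|\{x \in E : \mathcal{M}(|f|^2) > c\lambda\}\bigr|$$
for every $\lambda > \lambda_0$ with $\lambda_0 \sim \fint_E |F|^2$. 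Once this is in hand, multiplying by $\lambda^{q/2 - 1}$ and integrating over $(\lambda_0, \infty)$ expresses $\|\mathcal{M}(|F|^2)\|_{L^{q/2}(E)}$ in terms of itself (with coefficient $\eta A^{q/2}$) plus $\|\mathcal{M}(|f|^2)\|_{L^{q/2}(E)}$ plus $\lambda_0^{q/2}|E|$; choosing $\sigma_0$ small enough that $\eta A^{q/2} < 1/2$ permits absorption, while the $L^{q/2}$-boundedness of $\mathcal{M}$ (available because $q>2$) converts the $f$-term into $\|f\|_{L^q(E)}^q$ and the $\lambda_0$-term into the $L^2$-average of $F$.

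To prove the good-$\lambda$ inequality, fix $\lambda > \lambda_0$ and Calder\'on--Zygmund decompose the open set $\{\mathcal{M}(|F|^2) > \lambda\}$ into a Whitney family of disjoint balls $\{B_i\}$, each satisfying $\fint_{B_i^{*} \cap E} |F|^2 \lesssim \lambda$ for a fixed dilate $B_i^{*}$; taking $\lambda_0$ large forces every selected $B_i$ to obey $|B_i| \leq \beta |E|$. On each $B_i$ invoke the hypothesis to write $F = F_{B_i} + R_{B_i}$ on $2B_i \cap E$. Then \eqref{rvm2-2} gives
$$\fint_{2B_i \cap E} |F_{B_i}|^2 \leq C_2 \sup_{B_i \subset B'} \fint_{B'} |f|^2 + C\sigma \lambda,$$
while \eqref{rvm2-1} together with Chebyshev yields, for every $\mu > 0$,
$$\bigl|\{x \in 2B_i \cap E : |R_{B_i}|^2 > \mu\}\bigr| \leq \frac{C |B_i|}{\mu^{p/2}}\Bigl(\lambda + \sup_{B_i \subset B'} \fint_{B' \cap E} |f|^2\Bigr)^{p/2}.$$
Covering $\{\mathcal{M}(|F|^2) > A\lambda\} \cap \bigcup_i B_i$ by suitable enlargements of the $B_i$'s, applying the two bounds above with $\mu \sim A\lambda$, and splitting the collection according to whether $\sup_{B_i \subset B'} \fint_{B'} |f|^2 \leq c\lambda$ or not, produces the good-$\lambda$ estimate with $\eta \lesssim \sigma + A^{-(p-2)/2}$; since $p>2$ one first chooses $A$ large and then $\sigma$ small.

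The main obstacle is the appearance of $\sup_{B_i \subset B'}$ on the right-hand sides of \eqref{rvm2-1}--\eqref{rvm2-2}, which ranges over balls that may be arbitrarily larger than $B_i$ and thus cannot be dominated by the local maximal function of $|f|^2$ at a point of $B_i$ in a naive way. The remedy is that every admissible $B'$ contains $B_i$, hence contains every point of $B_i$, so $\sup_{B_i \subset B'} \fint_{B'} |f|^2 \leq \mathcal{M}(|f|^2)(x)$ for every $x \in B_i$; this lets the $f$-contributions be absorbed into the second summand of the good-$\lambda$ inequality. A secondary subtlety is the structural restriction $|B| \leq \beta |E|$: it forces the stopping-time procedure to terminate at a scale comparable to $\mathrm{diam}(E)$, but the contribution from the large-ball regime contributes only the harmless base term $\bigl(\fint_E |F|^2\bigr)^{1/2}$ to \eqref{real result}.
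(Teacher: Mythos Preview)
The paper does not actually prove this theorem: its ``proof'' is a one-line citation to \cite[Theorem 2.1]{Geng-2018} (with the underlying method attributed to Shen's real-variable argument). Your good-$\lambda$ / Calder\'on--Zygmund outline is precisely the mechanism behind those references, so in substance your approach agrees with the paper's cited proof.

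One computational slip is worth correcting. From your own Chebyshev estimate
\[
\bigl|\{x\in 2B_i\cap E:\ |R_{B_i}|^2>\mu\}\bigr|\ \le\ \frac{C|B_i|}{\mu^{p/2}}\,\lambda^{p/2}
\]
with $\mu\sim A\lambda$, the $R_B$-contribution to $\eta$ is of order $A^{-p/2}$, not $A^{-(p-2)/2}$ as you wrote. This matters: with the exponent $A^{-(p-2)/2}$ the absorption step $\eta A^{q/2}<1/2$ would require $q<p-2$, which is not assumed; with the correct exponent $A^{-p/2}$ one gets $\eta A^{q/2}\lesssim A^{(q-p)/2}\to 0$, and the argument closes for every $2<q<p$. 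Likewise the $F_B$-contribution (via the weak $(1,1)$ bound for $\mathcal{M}$) is of order $(c+\sigma)/A$ rather than $\sigma$; one first fixes $A$ large using $q<p$, then chooses $c$ and $\sigma_0$ small depending on $A$. With these adjustments your outline is correct.
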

\begin{proof}
  See \cite[Theorem 2.1]{Geng-2018}.
\end{proof}

With the real variable method at disposal, we give the sufficient condition.

\begin{thm}\label{sufficient condition}
  Let $p>2$. Suppose $V(x)>0$ a.e. in $\mathbb{R}^d$. Assume that for any ball $B(x_0,r_0)$ with the property that either $x_0\in \partial\Omega_R$ or $B(x_0,2r_0)\subset\Omega_R$ for $R$ large, the weak reverse H\"older inequality
  \begin{equation}
    \label{rh inequality}
    \bigg\{\fint_{B(x_0,r_0)\cap\Omega_R}(|\nabla v|+V^{\frac12} |v|)^pdx\bigg\}^{\frac{1}{p}}\leq C_0\bigg\{\fint_{B(x_0,2r_0)\cap\Omega_R}(|\nabla v|+V^{\frac12} |v|)^2dx\bigg\}^{\frac{1}{2}}
  \end{equation}
  holds, whenever $v\in W^{1,2}(B(x_0,2r_0)\cap\Omega_R)$ satisfies $-\Delta v+V v =0$ in $B(x_0,2r_0)\cap\Omega_R$ and $\frac{\partial v}{\partial n}  =0$ on $\Delta(x_0,2r_0)\cap\partial\Omega_R$. 
  Let $u\in W^{1,2}(\Omega)$ be a weak solution of \eqref{NP} with $f\in L^p(\Omega,\R^d)$ and $g=0$. Then $u\in W^{1,p}(\Omega)$ and
  \begin{equation}
    \label{result1}
    \|\nabla u\|_{L^p(\Omega)}+\|V^{\frac12} u\|_{L^p(\Omega)}\leq C\|f\|_{L^p(\Omega)},
  \end{equation}
  with constant $C>0$ depending only on $d,p,C_0$ and the Lipschitz character of $\Omega$.
\end{thm}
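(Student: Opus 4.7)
The plan is to apply the real-variable criterion of Theorem \ref{real variable method} on each truncated domain $\Omega_R$, with $F=|\nabla u|+V^{1/2}|u|$ and external datum $|f|$, then pass to the limit $R\to\infty$. Since Theorem \ref{real variable method} produces an $L^q$ bound only for $q$ strictly below the reverse H\"older exponent, one first observes that the hypothesis \eqref{rh inequality} self-improves (by Gehring's lemma) to exponent $p+\varepsilon$ for some $\varepsilon>0$, so applying the criterion at exponent $p+\varepsilon$ will yield the target bound at $q=p$.

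For a ball $B=B(x_0,r_0)$ with $|B|\le\beta|\Omega_R|$ and either $x_0\in\partial\Omega_R$ or $4B\subset\Omega_R$, set $D=4B\cap\Omega_R$ and decompose $u=w+v$ on $D$, where $w$ is a weak solution of the mixed problem
\begin{equation*}
\begin{cases}
-\Delta w+V w=\mathrm{div}(\chi_D f) & \text{in } D,\\
\partial w/\partial n=-\chi_D f\cdot n & \text{on } \partial\Omega_R\cap 4B,\\
w=0 & \text{on } \Omega_R\cap\partial(4B).
\end{cases}
\end{equation*}
Then $v=u-w$ satisfies $-\Delta v+Vv=0$ in $D$ with $\partial v/\partial n=0$ on $\partial\Omega_R\cap 4B$. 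Testing the equation for $w$ against $w$ itself and using $V>0$ gives the energy bound
$$\int_D(|\nabla w|^2+V|w|^2)\,dx\le C\int_D|f|^2\,dx,$$
so with $F_B:=(|\nabla w|+V^{1/2}|w|)\chi_D$, condition \eqref{rvm2-2} holds with $\sigma=0$.

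Setting $R_B:=(|\nabla v|+V^{1/2}|v|)\chi_D$, hypothesis \eqref{rh inequality} at the improved exponent $p+\varepsilon$ applied to $v$ on the pair $(2B,4B)$, combined with $R_B\le F+F_B$ and the previous energy bound, furnishes \eqref{rvm2-1}. Because $|F|\le|F_B|+|R_B|$ on $D$, Theorem \ref{real variable method} applies with $\alpha=4$, $\beta$ sufficiently small, and $q=p$, delivering
$$\left(\fint_{\Omega_R}|F|^p\,dx\right)^{1/p}\le C\left\{\left(\fint_{\Omega_R}|F|^2\,dx\right)^{1/2}+\left(\fint_{\Omega_R}|f|^p\,dx\right)^{1/p}\right\}$$
with constants independent of $R$. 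Combined with the standard energy identity $\|F\|_{L^2(\Omega)}\le C\|f\|_{L^2(\Omega)}$ and the decay of Schr\"odinger solutions at infinity used in \cite{Shen-1994-IUMJ}, passing $R\to\infty$ yields \eqref{result1}.

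The main obstacle will be the construction and a priori estimate for the auxiliary $w$ on the Lipschitz subdomain $D$: one needs solvability of the mixed Dirichlet--Neumann problem with a clean $L^2$ energy bound whose constant depends only on the Lipschitz character of $\Omega$ and not on $V$, which is delicate where $D$ straddles the convex boundary piece. A minor bookkeeping issue is that \eqref{rh inequality} is phrased on the pair $(B(x_0,r_0),B(x_0,2r_0))$ while the real-variable scheme couples $2B$ with $\alpha B$; this is handled by a trivial rescaling.
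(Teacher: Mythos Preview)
Your overall architecture matches the paper's: verify the hypotheses of Theorem \ref{real variable method} on each $\Omega_R$ by splitting $u$ into a ``bad'' part absorbing the data and a ``good'' part that is a homogeneous local solution to which \eqref{rh inequality} applies, invoke Gehring's self-improvement so the real-variable output lands at exponent $p$, then send $R\to\infty$.

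The one substantive divergence is in how the bad part is constructed. You localize to $D=4B\cap\Omega_R$ and solve a \emph{mixed} Dirichlet--Neumann problem for $w$ on $D$; you rightly flag the well-posedness and uniformity of constants for this mixed problem as the main obstacle. The paper sidesteps this entirely: it introduces a cutoff $\varphi\in C_0^\infty(8B)$ with $\varphi\equiv1$ on $4B$, and takes $u_1$ to be the solution of the \emph{pure Neumann} problem
\[
-\Delta u_1+Vu_1=\operatorname{div}(\varphi f_R)\ \text{in }\Omega_R,\qquad \partial u_1/\partial n=-\varphi f_R\cdot n\ \text{on }\partial\Omega_R,
\]
on the \emph{whole} truncated domain $\Omega_R$. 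Since $V>0$ makes the form coercive, existence and the energy bound $\int_{\Omega_R}(|\nabla u_1|^2+V|u_1|^2)\le C\int_{8B\cap\Omega_R}|f|^2$ are immediate from Lax--Milgram, and $u_2=u-u_1$ is automatically a homogeneous Neumann solution on $4B\cap\Omega_R$ because $\varphi\equiv1$ there. This global-with-cutoff device is the cleaner route and dissolves the obstacle you identified; your local mixed problem also works (Lax--Milgram with the Dirichlet piece gives coercivity and the energy constant is universal), but it is unnecessary machinery here.
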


\begin{proof}
 For $R>0$ large sufficiently, let
 \begin{align*}
  f_R(x)=\begin{cases}
    f(x),\quad& x\in\Omega_R\text{ or }x\in\partial \Omega\cap\partial \Omega_R,\\[0.1cm]
    0, &\text{otherwise}.
  \end{cases}
\end{align*}
 By taking the limit $R\to \infty$, it suffices for us to show \begin{equation}\label{3.6}
    \|\nabla u\|_{L^p(\Omega_R)}+\|V^{\frac12} u\|_{L^p(\Omega_R)}\leq C\|f_R\|_{L^p(\Omega_R)}
  \end{equation}
  where   \begin{equation}
    -\Delta  u+V u =\mbox{div}\,f_R \quad\text{ in } \Omega,\quad\text{and}\quad\frac{\partial u}{\partial \nu}  =-f_R\cdot n \quad  \text{ on } \partial\Omega.
  \end{equation}
Given any ball $B(x,r)$ satisfying $|B(x,r)| \leqslant \beta|\Omega_R|$ and either $B(x,2r) \subset \Omega_R$ or $B(x,r)$ centers on $\partial \Omega_R$, we set a cut-off function $\varphi \in C_{0}^{\infty}(B(x,8r))$ such that $\varphi=1$ in $B(x,4r)$ and $\varphi=0$ outside $B(x,8r)$. Let $u_1$ be the solution of
\begin{align}\label{Equation v}
-\Delta u_1+Vu_1=\operatorname{div}(\varphi f_R) \quad \text {in } \Omega_R, \quad\text{and}\quad\frac{\partial u_1}{\partial \nu}=-\varphi f_R \cdot n \quad \text {on }\partial \Omega_R.
\end{align}
Let $u_2=u-u_1$ and $D_R(x,t r)=B(x,t r)\cap\Omega_R$, then
\begin{equation}\label{Equation w2}
-\Delta u_2+Vu_2=0\quad \text {in } D_R(x,4 r)\quad \text{ and }\quad \displaystyle\frac{\partial u_2}{\partial \nu}=0\quad  \text{on } \Delta(x,4r)\cap\partial\Omega_R.
\end{equation}
To apply Theorem \ref{real variable method}, let $F=|\nabla u| +V^{\frac12} |u|, F_{B}=|\nabla u_1|+V^{\frac12} |u_1|$ and $R_{B}=|\nabla u_2|+V^{\frac12} |u_2|$, Thus $|F| \leqslant\left|F_{B}\right|+\left|R_{B}\right|$. Then it follows from integration by parts to \eqref{Equation v} that
$$ \fint_{D_R(x,2 r)}\left|F_{B}\right|^{2}\, d x \leqslant \frac{C}{|D_R(x,2 r)|} \int_{\Omega_R}(|\nabla u_1|^{2}+Vu_1^2)\, d x\leqslant C \fint_{D_R(x,8 r)}|f_R|^{2}\, d x.$$
Claim that the weak reverse H\"older inequality
 \begin{equation}\label{rh ineq 1}
   \left\{ \fint_{D_R(x,2 r)}\left|R_{B}\right|^{p} d x\right\}^{\frac{1}{p}}\leqslant C\left\{ \fint_{D_R(x,4 r)}(|\nabla u_2|^{2}+Vu_2^2) d x\right\}^{\frac{1}{2}}
 \end{equation}
holds for a moment, and we obtain
$$
\begin{aligned}
\left\{ \fint_{D_R(x,2 r)}\left|R_{B}\right|^{p} d x\right\}^{\frac{1}{p}}&\leqslant C\left\{ \fint_{D_R(x,4 r)}(|\nabla u|^{2}+Vu^2) d x+ \fint_{D_R(x,4 r)}(|\nabla u_1|^{2}+Vu_1^2) d x\right\}^{\frac{1}{2}} \\
&\leqslant C\left\{ \fint_{D_R(x,4 r)}|F|^{2} d x\right\}^{\frac{1}{2}} +C\left\{ \fint_{D_R(x,8 r)}|f_R|^{2} d x\right\}^{\frac{1}{2}}.
\end{aligned}
$$
Hence by Theorem \ref{real variable method} and the self-improving property of the reverse H\"older condition
\begin{equation}
  \label{rh1}
  \left\{ \fint_{\Omega_R}(|\nabla u|+V^{\frac12} |u|)^{p} d x\right\}^{\frac{1}{p}} \leqslant C\left\{\left( \fint_{\Omega_R}(|\nabla u|+V^{\frac12} u)^{2} d x\right)^{\frac{1}{2}}+\left( \fint_{\Omega_R}|f_R|^{p} d x\right)^{\frac{1}{p}}\right\}.
\end{equation}
This, combining with integration by parts as well as H\"older's inequality, gives \eqref{3.6}.
\end{proof}


To establish the reverse H\"older inequality, we need an auxiliary lemma as follows.

\begin{lemma}\label{convex-estimate}
  Suppose $V>0$ and $\Omega$ is the region above a convex graph in $\R^d$ with $C^2$ boundary. Assume $u$ is a weak solution of $-\Delta u+Vu=0$ in $D(x_0,2r)$ and $\frac{\partial v}{\partial n}  =0$ on $\Delta(x_0,2r)$. Then for $p>1$ and $\frac{1}{q}=\frac{1}{p}-\frac{1}{d}$,
\begin{equation}
\label{6.14}
\left\{\int_{B(x_0,r)\cap\Omega_R} |\nabla u|^{ q} dx\right\}^{\frac{1}{q}}\leqslant Cr^{-1}\left\{\int_{B(x_0,2r)\cap\Omega_R} \left(|\nabla u| +rV|u|\right)^{p} d x\right\}^{\frac{1}{p}}
\end{equation}
 where  $\varphi\in C^\infty_0(B(x_0,2r)\cap\Omega_R)$.
\end{lemma}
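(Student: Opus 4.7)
My plan is to combine a localized Sobolev embedding applied to $\nabla u$ with a second-order estimate that exploits the convexity of $\partial\Omega$ and the Neumann condition, then close the loop through the equation $\Delta u = Vu$.

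First, I would fix a cutoff $\varphi \in C_0^\infty(B(x_0,\tfrac{3r}{2}))$ with $\varphi\equiv 1$ on $B(x_0,r)$ and $|\nabla\varphi|\leq C/r$. Since $\Omega$ is a special Lipschitz domain (graph of a convex function with $\|\nabla\phi\|_\infty\leq M$), the Stein-type reflection operator extends $W^{1,p}(\Omega)\to W^{1,p}(\mathbb{R}^d)$ with constant depending only on $M$. Applying the Sobolev embedding $W^{1,p}\hookrightarrow L^q$ componentwise to $\varphi\nabla u$ (extended to $\mathbb{R}^d$), I obtain
\begin{equation*}
\|\nabla u\|_{L^q(D(x_0,r))} \;\leq\; C\|\nabla(\varphi\nabla u)\|_{L^p(\Omega)} \;\leq\; \frac{C}{r}\|\nabla u\|_{L^p(D(x_0,\frac{3r}{2}))} + C\|\nabla^2 u\|_{L^p(D(x_0,\frac{3r}{2}))}.
\end{equation*}

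The second step is the main point: I would invoke the local $W^{2,p}$ estimate for Neumann data in a convex $C^2$ domain,
\begin{equation*}
\|\nabla^2 u\|_{L^p(D(x_0,\frac{3r}{2}))} \;\leq\; C\|\Delta u\|_{L^p(D(x_0,2r))} + \frac{C}{r}\|\nabla u\|_{L^p(D(x_0,2r))}.
\end{equation*}
For $p=2$ this follows from a Caccioppoli-type identity: with a cutoff $\eta$ vanishing on $\partial B(x_0,2r)\cap\Omega$, integration by parts gives
\begin{equation*}
\int (\Delta u)^2 \eta^2 \,dx \;=\; \int |\nabla^2 u|^2\eta^2\,dx + \int_{\Delta(x_0,2r)} II(\nabla u,\nabla u)\eta^2\,d\sigma + E(\eta),
\end{equation*}
where $II$ is the second fundamental form of $\partial\Omega$ and $E(\eta)$ is a remainder controlled by $r^{-1}\|\nabla u\|_{L^2}\|\nabla^2 u\|_{L^2}$ plus lower-order terms (absorbable). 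Because $\Omega$ is convex, $II\geq 0$, so the boundary term is nonnegative and can be discarded. The $L^p$ case is obtained either by the standard Calder\'on--Zygmund lifting from the $L^2$ bound or, more directly, from the convex-domain $W^{2,p}$ theory developed in \cite{GS-JFA-2010, Geng-2018}.

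Finally, since $-\Delta u+Vu=0$, we have $\Delta u = Vu$ pointwise, so substituting $\|\Delta u\|_{L^p}=\|Vu\|_{L^p}$ into the two displays and combining yields
\begin{equation*}
\|\nabla u\|_{L^q(D(x_0,r))} \;\leq\; \frac{C}{r}\|\nabla u\|_{L^p(D(x_0,2r))} + C\|Vu\|_{L^p(D(x_0,2r))},
\end{equation*}
which, after pulling out the factor $r^{-1}$, is exactly \eqref{6.14}. The hard part is Step~2: even $p=2$ requires the convexity hypothesis to kill the boundary term, and the $L^p$ upgrade is what forces the $C^2$ assumption on $\partial\Omega$. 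Once that input is admitted, the rest is a straightforward Sobolev/Caccioppoli computation.
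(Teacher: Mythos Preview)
Your outline is sound for $p=2$: the convexity identity you write does give a local $W^{2,2}$ bound with constants depending only on $d$ and the Lipschitz character, and combining with Sobolev yields \eqref{6.14} for $p=2$, $q=2^*$. The issue is Step~2 for $p>2$. The references you cite, \cite{GS-JFA-2010,Geng-2018}, do not actually supply a local $W^{2,p}$ estimate for the Neumann Laplacian in convex domains; what they prove is a gradient reverse H\"older inequality (i.e.\ an estimate of the form \eqref{6.14} with $V=0$), and they obtain it through a level-set/co-area technique rather than through second-derivative bounds. If instead you fall back on Agmon--Douglis--Nirenberg $W^{2,p}$ theory using only the $C^2$ regularity of $\partial\Omega$, the constant will depend on the $C^2$ norm and not merely on the Lipschitz/convexity character, which is what the application downstream requires. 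And ``Calder\'on--Zygmund lifting from the $L^2$ bound'' is not a free move here: to lift you would need a reverse H\"older inequality for $\nabla^2 v$ with $v$ harmonic and Neumann on a convex boundary piece, which is again the very estimate in question.

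The paper's proof is precisely the level-set argument of \cite{GS-JFA-2010}, adapted to carry the potential term along. One sets $g=\varphi\nabla u$, introduces a Lipschitz cutoff $h_g$ supported on the slab $\{\rho<|g|^2<\tau\}$, and integrates the identity for $\int h_g\{(\operatorname{div} g)^2-\partial_i g_j\,\partial_j g_i\}$ by parts. Convexity kills the boundary term via the second fundamental form (exactly your $p=2$ mechanism), and the equation $\Delta u=Vu$ turns $\operatorname{div} g$ into $\nabla u\cdot\nabla\varphi+Vu\,\varphi$. The co-area formula then converts the slab integral into an inequality on the level set $\{|g|^2=\rho\}$; multiplying by $\rho^{b-2}$ and integrating in $\rho$ over $(0,\infty)$ gives, after a Sobolev--Poincar\'e step,
\[
\Bigl(\int |\varphi\nabla u|^{b\cdot 2^*}\Bigr)^{2/2^*}\le C\int |\varphi\nabla u|^{2b-2}\bigl(|\nabla u||\nabla\varphi|+V|u|\varphi\bigr)^2,
\]
from which \eqref{6.14} follows by H\"older after choosing $b$ to match $p$. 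The level-set weight $h_g$ is what substitutes for the missing $L^p$ version of your integration-by-parts identity and keeps the constants independent of the $C^2$ norm. So your plan is correct in spirit, but the ``$W^{2,p}$ black box'' you invoke is in fact the level-set machinery that the paper carries out in full.
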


\begin{proof}
Fix $0<\rho<\tau<\infty$, for $g\in G:=\{g=(g_1,\cdots,g_d)\in (C^2_0(\overline{\Omega}))^d:g\cdot n=0\text{ on }\partial\Omega\}$ let $h_g:\overline{\Omega}\to [0,1]$ be continuous so that
\begin{align*}
  h_g(x)=
  \begin{cases}
  0,\quad &x\in I_{g}:=\{x\in\Omega: |g(x)|^{2}\leq\rho\},\\
  \frac{1}{\tau-\rho}(|g(x)|^2-\rho),\quad & x\in II_{g}:=\{x\in\Omega: \rho<|g(x)|^{2}<\tau\},\\
  1,\quad &x\in III_{g}:=\{x\in\Omega: |g(x)|^{2}\geq\tau\}.
  \end{cases}
\end{align*}
 It follows from integration by parts that
\begin{equation*}\label{5.8}
  \begin{aligned}
&2 \int_{\Omega}  h_g^{\prime} g_kg_i\frac{\partial g_k}{\partial x_j}\frac{\partial g_j}{\partial x_i}\, d x-\int_{\partial \Omega} h_g\left\{g_in_j\frac{\partial g_j}{\partial x_i}-g_in_i {\rm div}g\right\} d \sigma\\
&\qquad=\int_{\Omega} h_g\left\{({\rm div}g)^{2}-\frac{\partial g_i}{\partial x_j}\frac{\partial g_j}{\partial x_i}\right\} d x+2 \int_{\Omega} h_g^{\prime}g_kg_i\frac{\partial g_k}{\partial x_i}{\rm div}g\, d x
\end{aligned}
\end{equation*}
where $\sigma= H^{d-1}$ denotes the $(d-1)$-dimensional Hausdorff measure. Let $\beta(\cdot,\cdot)$ denote the second fundamental quadratic form of $\partial\Omega$ (see \cite[pp.133-134]{Grisvard-book-2011}). The convexity
  $g_in_i{\rm div}g-g_in_j\frac{\partial g_j}{\partial x_i}=-\beta(g-(g\cdot n)n,g-(g\cdot n)n)\geq0\text{ on } \partial\Omega$,
gives that
\begin{equation}
  \label{3.1}
 \begin{aligned}
   \int_{II_g}|v|^{2 } d x &\leqslant 2 \int_{II_g}|v||g|\bigg\{\bigg(\sum_{i,j} \left|\frac{\partial g_i}{\partial x_j}-\frac{\partial g_j}{\partial x_i}\right|^2\bigg)^{\frac12}+|{\rm div}g|\bigg\}dx\\
   &\qquad\qquad+2(\tau-\rho) \int_{II_g\cup III_g}h_g \left\{|{\rm div}g|^2- \frac{\partial g_i}{\partial x_j}\frac{\partial g_j}{\partial x_i}\right\}dx.
 \end{aligned}
\end{equation}
where $v=\nabla|g|^2$ and Cauchy's inequality was also used. 
Take $g=(\nabla u)\varphi$ in \eqref{3.1} where $\varphi\in C^\infty_0(B(x_0,2r)\cap\Omega_R)$ such that $\varphi=1$ in $B(x_0,r)\cap\Omega_R$ and $|\nabla \varphi|\leq Cr^{-1}$. It is easy to verify
$$\frac{\partial g_i}{\partial x_{j}}=\varphi \frac{\partial^{2} u}{\partial x_{i} \partial x_{j}}+\frac{\partial u}{\partial x_{i}} \frac{\partial \varphi}{\partial x_{j}}$$
and
$${\rm div}g={\rm div}((\nabla u)\varphi)=(\Delta u) \varphi+\nabla u \cdot \nabla \varphi=\nabla u \cdot \nabla \varphi+Vu\varphi.$$
Note that
\begin{equation*}
  \label{6.10}
  \begin{aligned}
\bigg(\sum_{i,j} \left|\frac{\partial g_i}{\partial x_j}-\frac{\partial g_j}{\partial x_i}\right|^2\bigg)^{\frac12}+|{\rm div}g|
& \leqslant\left\{2|\nabla u|^{2}|\nabla \varphi|^{2}+2|\nabla u \cdot \nabla \varphi|^{2}\right\}^{\frac12}+|\nabla u \cdot \nabla \varphi|+V|u||\varphi|\\
& \leqslant C|\nabla u||\nabla \varphi|+V|u||\varphi|
\end{aligned}
\end{equation*}
and
\begin{equation*}
  \label{6.11}
  \begin{aligned}
  |{\rm div}g|^{2}-\frac{\partial g_i}{\partial x_j}\frac{\partial g_j}{\partial x_i}
  & \leqslant 2V|\nabla u \cdot \nabla\varphi||u||\varphi| +V^2|u|^2|\varphi|^2 -\varphi^{2}\left|\nabla^{2} u\right|^{2}-2 \varphi \frac{\partial^{2} u}{\partial x_{i} \partial x_{j}} \frac{\partial u}{\partial x_{i}} \frac{\partial \varphi}{\partial x_{j}} \\
  & \leqslant-\sum_{i, j}\left(\varphi \frac{\partial^{2} u}{\partial x_{i} \partial x_{j}}+\frac{\partial u}{\partial x_{i}} \frac{\partial \varphi}{\partial x_{j}}\right)^{2}+2|\nabla u|^{2}|\nabla \varphi|^{2}+V^2|u|^2|\varphi|^2 \\
  & \leqslant 2|\nabla u|^{2}|\nabla \varphi|^{2}+V^2|u|^2|\varphi|^2.
  \end{aligned}
\end{equation*}
By using the co-area formula repeatedly, we have
$$
\begin{aligned}
\int_{\rho }^{\tau} \int_{\{|g|^2=s\}}|v| d \sigma d s \leqslant & C\int_{\rho }^{\tau} \int_{\{|g|^2=s\}}|g|h\, d \sigma d s +C(\tau-\rho )\int_{\{|g|^2>\rho \}} h_gh^2 d x
\end{aligned}
$$
where $h=|\nabla u||\nabla \varphi| +V|u||\varphi|$. Taking $\tau \rightarrow \rho^{+}$, we obtain that for $\rho\in(0,\infty)$,
\begin{equation}
  \label{6.9}
\begin{aligned}
\int_{\{|g|^2=\rho\}}|v|\, d \sigma \leqslant & C\rho^{\frac12} \int_{\{|g|^2=\rho\}}h\, d \sigma +C\int_{\{|g|^2>\rho \}}h^2 d x.
\end{aligned}
\end{equation}
where Lebesgue's differentiation theorem is also used.

Without loss of generality, assume that $|(\nabla u)\varphi|^2$ is bounded from below by a positive constant. Multiplying both sides of \eqref{6.9} by $\rho^{b-2}$ and integrating the resulting inequality in $\rho$ over $(0,\infty)$, we obtain that for $b>1$,
$$
\begin{aligned}
\int_{\Omega} |(\nabla u)\varphi|^{2b-4}|v|^{2} d x =\int_{0}^{\infty} \rho^{a} \int_{\{|g|^2=\rho\}}|v| d \sigma d \rho \\
\leqslant C\varepsilon\int_{\Omega} |(\nabla u)\varphi|^{2b-4}|v|^2d x+ C \int_{\Omega} |(\nabla u)\varphi|^{2b-2} h^{2} d x
\end{aligned}
$$
where the co-area formula and the Cauchy's inequality are used. Then by Poincar\'e inequality,
\begin{equation}
  \label{6.13}
  \left\{\int_{\Omega} |(\nabla u)\varphi|^{ b 2^*} dx\right\}^{\frac{2}{2^*}}\leqslant C\int_{\Omega} |(\nabla u)\varphi|^{2b-4}|v|^{2} d x \leqslant C \int_{\Omega} |(\nabla u)\varphi|^{2b-2} h^{2} d x
\end{equation}
where $2^*=\frac{2 d}{d-2}$. Using H\"older's inequality, we obtain for $p',p>1$,
\begin{equation}
  \label{6.15}
   \int_{\Omega} |(\nabla u)\varphi|^{2b-2} h^{2} d x \leqslant\left\{\int_{\Omega} |(\nabla u)\varphi|^{(2b-2) \frac{p'}2 } d x\right\}^{\frac2 { p'}}\left\{\int_{\Omega} h^{ p} d x\right\}^{\frac2 { p}},
\end{equation}
where $\frac1 { p'}+\frac1 {p}=\frac12$. Choose $p'$ so that $(b-1) p'=b 2^*$ and let $q=b 2^*$.  A direct computation leads $\frac{1}{q}=\frac{1}{p}-\frac{1}{d}$ and \eqref{6.14}. This completes the proof.
\end{proof}

\begin{thm}\label{rh inequality thm}
 Assume $V>0$ satisfies \eqref{V} and $\Omega$ is the region above a convex graph in $\R^d$ with $C^2$ boundary. Then the weak reverse H\"older inequality \eqref{rh inequality} holds for any $p>2$.
\end{thm}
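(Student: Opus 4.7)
The strategy is to establish the weak reverse H\"older inequality \eqref{rh inequality} by splitting into two regimes governed by the dichotomy $r_0 m(x_0, V) \gtrless 1$, following the scheme of \cite{GS-JFA-2010} with the Schr\"odinger-type adjustments forced by the presence of $V$. Throughout, write $F = |\nabla v| + V^{\frac{1}{2}}|v|$ and $\lambda = r_0 m(x_0, V)$. Once an inequality of the form $(\fint F^{q})^{1/q} \leq C(\fint F^{2})^{1/2}$ with some $q > 2$ is in hand on every ball (with uniform constant), a standard bootstrap produces every $p$.

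\noindent\textbf{Large-scale regime ($\lambda \geq 1$).} Lemma~\ref{pointwise estimate} gives, for any integer $k \geq 1$,
\[
\sup_{D(x_0, r_0/2)} |v| \leq C_k (1 + \lambda)^{-k}\Bigl(\fint_{D(x_0, r_0)} |v|^2\,dx\Bigr)^{1/2}.
\]
Combining this with standard Caccioppoli and Moser-type interior bounds (and with the pointwise bound $V \leq Cm(x,V)^2$ from Proposition~\ref{upper-bound-V} together with the comparability of $m$ supplied by Lemma~\ref{lemma-function m}) transfers the decay to $F$, yielding $\sup F \leq C_k r_0^{-1}(1+\lambda)^{k_{0}-k}(\fint |v|^{2})^{1/2}$ for a fixed power $k_{0}$. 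On the other hand, Lemma~\ref{Fefferman-Phong}, combined with the doubling property \eqref{doubling-condition} and Proposition~\ref{prop2.6} (which give $\fint V \geq c m(x_0,V)^2 \lambda^{-d}$ in this regime), produces $r_0^{-1}(\fint |v|^{2})^{1/2} \leq C(1+\lambda)^{d/2}(\fint F^{2})^{1/2}$. Taking $k$ large enough absorbs every growth factor in $\lambda$, so $\sup F \leq C(\fint F^{2})^{1/2}$, and \eqref{rh inequality} is immediate for every $p$.

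\noindent\textbf{Small-scale regime ($\lambda \leq 1$).} The key observation is that Proposition~\ref{upper-bound-V} and Lemma~\ref{lemma-function m} give the pointwise bound $rV^{1/2}(x) \leq C rm(x,V) \leq C\lambda \leq C$ on $D(x_0, 2r_0)$, so $rV|v| \leq CV^{1/2}|v|$ pointwise. Hence Lemma~\ref{convex-estimate}, rewritten in averaged form, reads
\[
\Bigl(\fint_{D(x_0, r_0)} |\nabla v|^{q}\,dx\Bigr)^{1/q} \leq C\Bigl(\fint_{D(x_0, 2r_0)} F^{p}\,dx\Bigr)^{1/p}, \qquad \tfrac{1}{q} = \tfrac{1}{p} - \tfrac{1}{d},
\]
starting from $p = 2$. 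For the companion bound on the potential term, split $v = (v - v_{2B}) + v_{2B}$ where $v_{2B} = \fint_{D(x_0, 2r_0)} v$. Using $V^{1/2} \leq C m(x_0, V) \leq C/r_0$ on $D(x_0, 2r_0)$, Sobolev--Poincar\'e on the oscillation piece gives $(\fint(V^{1/2}|v - v_{2B}|)^{q})^{1/q} \leq C(\fint |\nabla v|^{2})^{1/2}$. For the constant piece, the $B_\infty$ condition \eqref{V} yields the reverse H\"older bound $(\fint V^{q/2})^{2/q} \leq C\fint V$, while Lemma~\ref{Fefferman-Phong} gives $\min\{r_0^{-2}, \fint V\}^{1/2}|v_{2B}| \leq C(\fint F^{2})^{1/2}$; multiplying these and distinguishing the subcases $\fint V \gtrless r_0^{-2}$ produces $(\fint(V^{1/2}|v_{2B}|)^{q})^{1/q} \leq C(\fint F^{2})^{1/2}$. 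Combining the two pieces gives $(\fint F^{q})^{1/q} \leq C(\fint F^{2})^{1/2}$ with $q = 2d/(d-2) > 2$; iterating the same argument with $p = q$ on the right produces a strictly larger exponent at each step, and in finitely many steps one reaches $q_j > d$, at which point Sobolev embedding closes the bootstrap for every $p < \infty$.

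\noindent\textbf{Main obstacle.} The delicate step is controlling $V^{1/2}|v_{2B}|$ in the small-scale subregime where $V$ is much smaller than $r_0^{-2}$: Fefferman--Phong then only supplies the weak estimate $(\fint V)^{1/2}|v_{2B}| \leq C(\fint F^{2})^{1/2}$, and it is precisely the $B_\infty$ comparison $(\fint V^{q/2})^{1/q} \leq C(\fint V)^{1/2}$ that allows one to upgrade this to an $L^q$ average of $V^{1/2}|v_{2B}|$. This is the point at which the hypothesis $V \in B_\infty$, rather than merely $V \in A_\infty$ or $V \in L^\infty_{\mathrm{loc}}$, is used in an essential way.
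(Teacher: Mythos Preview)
Your small-scale argument is a correct and interesting alternative to the paper's route: the paper never introduces the mean--oscillation split $v=(v-v_{2B})+v_{2B}$, but instead feeds the pointwise bound on $|v|$ from Lemma~\ref{pointwise estimate} and the $B_\infty$ inequality $(\fint V^p)^{1/p}\le C\fint V$ directly into the error term $r(\fint|Vu|^p)^{1/p}$ coming out of Lemma~\ref{convex-estimate}, and then distinguishes $r^2\fint V\gtrless 1$ (which, via Proposition~\ref{prop2.6}, is essentially your dichotomy $\lambda\gtrless 1$). Your decomposition is slightly more hands-on but equally valid, and your identification of the $B_\infty$ comparison $(\fint V^{q/2})^{1/q}\le C(\fint V)^{1/2}$ as the essential use of the hypothesis is exactly right.

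There is, however, a genuine gap in your large-scale regime. You claim that ``standard Caccioppoli and Moser-type interior bounds'' transfer the decay of $\sup|v|$ to $\sup F=\sup(|\nabla v|+V^{1/2}|v|)$. Moser iteration controls $\sup|v|$, and Proposition~\ref{upper-bound-V} then handles $\sup V^{1/2}|v|$, but neither gives $\sup|\nabla v|$: for $-\Delta v+Vv=0$ the gradient is not a subsolution (differentiating produces a term $(\partial_i V)v$), and there is no ``standard'' boundary $L^\infty$ gradient estimate in a convex domain short of the very bootstrap you are trying to run. The paper sidesteps this entirely by never invoking $\sup|\nabla v|$: it applies Lemma~\ref{convex-estimate} uniformly in both regimes, and the only $L^\infty$ input is $\sup|v|$ from Lemma~\ref{pointwise estimate}, whose polynomial decay in $1+\lambda$ absorbs the growth $r^2\fint V\le C\lambda^{k_0}$ coming from Proposition~\ref{prop2.6}. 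You can repair your argument the same way---drop the claim about $\sup F$ and instead run Lemma~\ref{convex-estimate} plus $\sup|v|$ in the large-scale case as well---but as written the step does not go through.
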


\begin{proof}
 Denote $D_R(x,r)=B(x,r)\cap\Omega_R$. With Lemma \ref{convex-estimate} at disposal, we obtain that for all $p>1$ and $\frac1q=\frac1p-\frac1d$,
 \begin{align*}
   \left\{\fint_{D_R(x_0,r)}|\nabla u|^{ q} d x\right\}^{\frac{1}{q}} 
   &\leqslant  C\left\{\fint_{D_R(x_0,2r)}|\nabla u|^{p} d x\right\}^{\frac{1}{p}}
   +Cr\left\{\fint_{D_R(x_0,2r)}|V u|^{p} d x\right\}^{\frac{1}{p}}.
 \end{align*}
 Using Lemma \ref{pointwise estimate} and \eqref{V}, we have
 \begin{align*}
   r\left\{\fint_{D_R(x_0,r)}|V u|^{ p} d x\right\}^{\frac{1}{p}} &\leqslant  Cr\left(\fint_{D_R(x_0,r)}V^p\, d x\right)^{\frac1p}\sup_{D_R(x_0,r)}|u|\\
   &\leqslant  \frac{Cr^{1-\frac{d}{2}}}{\left\{1+r m\left(x_{0}, V\right)\right\}^{k}}\fint_{D_R(x_0,2r)}V\,dx \left( \int_{D_R(x_0,2r)}|u(x)|^{2} d x\right)^{\frac12}
 \end{align*}
 If $r^2\fint_{D(x_0,r)}V\,dx\leq1$, it follows from Lemma \ref{Fefferman-Phong} and H\"older's inequality that for $p\geq2$,
 \begin{align*}
   r\left\{\fint_{D_R(x_0,r)}|V u|^{ p} d x\right\}^{\frac{1}{p}}
   &\leqslant  Cr^{-\frac{d}{2}}\left(r^2\fint_{D_R(x_0,r)}V\,dx\right)^{\frac12}  \left( \fint_{D_R(x_0,2r)}V\,dx\int_{D_R(x_0,2r)}|u(x)|^{2} d x\right)^{\frac12}\\
   &\leqslant  C\left\{\fint_{D_R(x_0,2r)}(|\nabla u| +|V^{\frac12} u|)^{p} d x\right\}^{\frac{1}{p}}.
 \end{align*}
 In the case of $r^2\fint_{D(x_0,r)}V\,dx>1$, it follows from Proposition \ref{prop2.6} and Lemma \ref{Fefferman-Phong} that
 \begin{align*}
   r\left\{\fint_{D_R(x_0,r)}|V u|^{ p} d x\right\}^{\frac{1}{p}}
   &\leqslant  \frac{Cr^{-\frac{d}{2}}\cdot r^2\fint_{D_R(x_0,r)}V\,dx}{\left\{1+r m\left(x_{0}, V\right)\right\}^{k}} \left( r^{-2}\int_{D_R(x_0,2r)}|u(x)|^{2} d x\right)^{\frac12}\\
   &\leqslant  \frac{C\left\{ r m(x_{0}, V)\right\}^{k_0}}{\left\{1+r m\left(x_{0}, V\right)\right\}^{k}} \left\{\fint_{D_R(x_0,2r)}(|\nabla u| +|V^{\frac12} u|)^{p} d x\right\}^{\frac{1}{p}}\\
   &\leqslant  C\left\{\fint_{D_R(x_0,2r)}(|\nabla u| +|V^{\frac12} u|)^{p} d x\right\}^{\frac{1}{p}}
 \end{align*}
 if we choose $k=k_0$. This gives
 \begin{align*}
   r\left\{\fint_{D_R(x_0,r)}|V u|^{ p} d x\right\}^{\frac{1}{p}}
   &\leqslant  C\left\{\fint_{D_R(x_0,2r)}(|\nabla u| +|V^{\frac12} u|)^{p} d x\right\}^{\frac{1}{p}}
 \end{align*}
 and in similar manner,
 \begin{align*}
   \left\{\fint_{D_R(x_0,r)}|V^{\frac12} u|^{ q} d x\right\}^{\frac{1}{q}}
   &\leqslant  C\left\{\fint_{D_R(x_0,2r)}(|\nabla u| +|V^{\frac12} u|)^{p} d x\right\}^{\frac{1}{p}}.
 \end{align*}
 By a iteration and the self-improvement, we have for $p>2$
 \begin{equation}\label{5.7}
   \left\{\fint_{D_R(x_0,r)}(|\nabla u| +|V^{\frac12} u|)^{p} d x\right\}^{\frac{1}{p}}
   \leqslant  C\left\{\fint_{D_R(x_0,2 r)}(|\nabla u| +|V^{\frac12} u|)^{ 2} d x\right\}^{\frac{1}{2}}.
 \end{equation}
\end{proof}

\section{Duality argument}

\begin{lemma}\label{term1-1<p<2}
  Let $\Omega$ be the region above a convex graph in $\R^d$ with $C^2$ boundary. Suppose $V$ satisfies \eqref{V}. Assume $$1<p<\infty.$$ Let $u\in W^{1,2}(\Omega)$ be a weak solution of \eqref{NP} with $f\in L^p(\Omega,\R^d)$ and $g=0$. Then $u\in W^{1,p}(\Omega)$ and
  \begin{equation}
    \label{result1-1}
    \|\nabla u\|_{L^p(\Omega)}\leq C\|f\|_{L^p(\Omega)},
  \end{equation}
  with constant $C$ depending only on $d,p$ and the Lipschitz character of $\Omega$.
\end{lemma}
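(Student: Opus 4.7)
My plan is to split into the ranges $p\geq 2$ and $1<p<2$. For $p\geq 2$ the estimate is immediate: Theorem \ref{rh inequality thm} supplies the weak reverse H\"older inequality \eqref{rh inequality}, which by Theorem \ref{sufficient condition} yields $\|\nabla u\|_{L^p(\Omega_R)}+\|V^{1/2}u\|_{L^p(\Omega_R)}\leq C\|f_R\|_{L^p}$ with $C$ independent of $R$; sending $R\to\infty$ exactly as in the proof of Theorem \ref{sufficient condition} gives the bound on $\Omega$, and the $V^{1/2}u$ term may then be discarded to match the statement.

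For $1<p<2$ I would run a duality argument against the adjoint Neumann problem. Writing $p'=p/(p-1)>2$, for each $h\in C_c^\infty(\Omega;\R^d)$ let $w$ solve
\begin{equation*}
  -\Delta w+Vw={\rm div}\,h \quad\text{in }\Omega,\qquad \frac{\partial w}{\partial n}=-h\cdot n \quad\text{on }\partial\Omega.
\end{equation*}
By the $p\geq 2$ case just established, $\|\nabla w\|_{L^{p'}(\Omega)}+\|V^{1/2}w\|_{L^{p'}(\Omega)}\leq C\|h\|_{L^{p'}(\Omega)}$. Testing the equation for $u$ against $w$ and the equation for $w$ against $u$, the Neumann boundary conditions make the boundary contributions cancel and both weak formulations collapse to the common symmetric bilinear form $\int_\Omega(\nabla u\cdot \nabla w+Vuw)\,dx$, producing the Green-type identity
\begin{equation*}
  \int_\Omega h\cdot \nabla u\,dx=\int_\Omega f\cdot \nabla w\,dx.
\end{equation*}
H\"older's inequality bounds the right side by $\|f\|_{L^p}\|\nabla w\|_{L^{p'}}\leq C\|f\|_{L^p}\|h\|_{L^{p'}}$, and taking the supremum over $h$ with $\|h\|_{L^{p'}}\leq 1$ yields $\|\nabla u\|_{L^p(\Omega)}\leq C\|f\|_{L^p(\Omega)}$.

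The main obstacle is rigorously justifying the test-function exchange in the \emph{unbounded} domain $\Omega$: neither $u$ nor $w$ is compactly supported and both live only in $W^{1,2}$-type spaces globally, so the integration by parts producing the identity above is not directly admissible. I would regularize by inserting a radial plateau cutoff $\eta_R$ equal to $1$ on $B(0,R)$ and supported in $B(0,2R)$, use $\eta_R w$ and $\eta_R u$ as legitimate test functions, and show that the commutator errors carrying $\nabla \eta_R$, of the types $R^{-1}\int_{R\leq |x|\leq 2R}|f||w|$ and $R^{-1}\int_{R\leq |x|\leq 2R}|h||u|$, vanish as $R\to\infty$. This is precisely where the Neumann-function estimates highlighted in Section \ref{section-1}, namely $\int_\Omega |\nabla_y N(x,y)|\,dy\leq Cm(x,V)^{-1}$ together with the weighted companion $\int_\Omega |\nabla_x N(x,y)|m(y,V)^q\,dy\leq Cm(x,V)^{q-1}$, enter the picture: applied to the representations $u(x)=-\int_\Omega \nabla_y N(x,y)\cdot f(y)\,dy$ and $\nabla u(x)=-\int_\Omega \nabla_x\nabla_y N(x,y)\cdot f(y)\,dy$ (and analogously for $w$), they yield decay of $u,\nabla u,w,\nabla w$ at infinity strong enough to push the cutoff errors to zero, after which the Green identity, and hence the desired $L^p$ bound, hold in the limit.
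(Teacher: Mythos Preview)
Your proposal is correct and follows essentially the same route as the paper: the $p\geq 2$ case via Theorem~\ref{rh inequality thm} plus Theorem~\ref{sufficient condition}, then duality against the Neumann solution of $-\Delta w+Vw=\operatorname{div}h$ for $1<p<2$. The paper obtains the Green identity in one line, without cutoffs: since by hypothesis $u\in W^{1,2}(\Omega)$ and likewise $w\in W^{1,2}(\Omega)$ with $V^{1/2}u,V^{1/2}w\in L^2(\Omega)$, each is an admissible test function in the other's weak formulation and the common bilinear form $\int_\Omega(\nabla u\cdot\nabla w+Vuw)\,dx$ is finite, so the plateau-cutoff and Neumann-function machinery you outline is not needed here (the paper reserves those estimates for the $V^{1/2}u$ bound in the proof of Theorem~\ref{thm1}).
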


\begin{proof}
  Theorem \ref{rh inequality thm}, together with Theorem \ref{sufficient condition}, gives that $u \in W^{1, p}(\Omega)$ and that for any $q>2$,
  \begin{equation}\label{term1}
    \|\nabla u\|_{L^{q}(\Omega)} \leqslant C\|f\|_{L^{q}(\Omega)}.
  \end{equation}
  Let $h \in C_{0}^{\infty}(\Omega,\R^d)$ and $v$ be a weak solution of $-\Delta v+Vv=\operatorname{div} h$ in $\Omega$ and $\frac{\partial v}{\partial \nu}=0$ on $\partial \Omega$. Suppose $p, q$ are conjugate.
The weak formulations of variational solution of $u$ and $v$ imply that
\begin{equation}
  \label{24}
 \left|\int_{\Omega} h_{i} \frac{\partial u}{\partial x_{i}} d x\right|=\left|\int_{\Omega} f_{i} \frac{\partial v}{\partial x_{i}} d x\right|\leqslant\|f\|_{L^{p}(\Omega)}\|\nabla v\|_{L^{q}(\Omega)} \leqslant C\|f\|_{L^{p}(\Omega)}\|h\|_{L^{q}(\Omega)}.
\end{equation}
where H\"older's inequality and \eqref{term1} are also used. This gives that for $1<p<2$
\begin{equation}
  \label{26}
  \|\nabla u\|_{L^{p}(\Omega)}=\sup _{\|h\|_{L^{q}(\Omega)} \leqslant 1}|\langle h, \nabla u\rangle| \leqslant C\|f\|_{L^{p}(\Omega)},
\end{equation}
and thus \eqref{result1-1} holds for all $1<p<\infty$ in the region above a convex graph.
\end{proof}

\begin{lemma}\label{term1-g}
  Assume $\Omega$ and $V$ are same as in Lemma \ref{term1-1<p<2}. Let $$1<p<\infty.$$ Then the solution $u\in W^{1,p}(\Omega)$ to \eqref{NP} with $g\in B^{-\frac 1p,p}(\partial\Omega)$ and $f=0$  satisfies
  \begin{equation}
    \label{result1-2}
    \|\nabla u\|_{L^p(\Omega)}\leq C\|g\|_{B^{-\frac 1p,p}(\partial\Omega)},
  \end{equation}
  where $C$ depends only on $d,p$ and the Lipschitz character of $\Omega$.
\end{lemma}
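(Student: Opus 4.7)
The plan is to handle the boundary-data case by a duality argument parallel to the proof of Lemma \ref{term1-1<p<2}, with a Besov pairing on $\partial\Omega$ replacing the interior $L^p$ pairing. By $L^p$ duality, write
$$
\|\nabla u\|_{L^p(\Omega)}=\sup\left\{\left|\int_\Omega h\cdot\nabla u\,dx\right|:h\in C_0^\infty(\Omega,\R^d),\ \|h\|_{L^{p'}(\Omega)}\leq 1\right\},
$$
where $p'$ is the H\"older conjugate of $p$. For each such test field $h$, I would introduce the auxiliary Neumann problem
$$
-\Delta v+Vv=\operatorname{div} h\ \text{in }\Omega,\qquad \frac{\partial v}{\partial n}=-h\cdot n\ \text{on }\partial\Omega.
$$
Lemma \ref{term1-1<p<2} applied with exponent $p'\in(1,\infty)$ yields $v\in W^{1,p'}(\Omega)$ with $\|\nabla v\|_{L^{p'}(\Omega)}\leq C\|h\|_{L^{p'}(\Omega)}$.

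Next, testing the weak formulation of the equation for $u$ against $v$ and that of the equation for $v$ against $u$, the interior bulk terms $\int_\Omega(\nabla u\cdot\nabla v+Vuv)\,dx$ cancel and produce the identity
$$
\int_\Omega h\cdot\nabla u\,dx=-\langle g,v|_{\partial\Omega}\rangle_{\partial\Omega}.
$$
The Besov duality and the trace theorem $W^{1,p'}(\Omega)\to B^{\frac 1p,p'}(\partial\Omega)$ (valid on the unbounded region above a Lipschitz graph by a partition of unity argument on $\partial\Omega$ exploiting the uniform bound $\|\nabla\phi\|_{L^\infty}\leq M$) then give
$$
\left|\int_\Omega h\cdot\nabla u\,dx\right|\leq \|g\|_{B^{-\frac 1p,p}(\partial\Omega)}\|v|_{\partial\Omega}\|_{B^{\frac 1p,p'}(\partial\Omega)}\leq C\|g\|_{B^{-\frac 1p,p}(\partial\Omega)}\|h\|_{L^{p'}(\Omega)}.
$$
Taking the supremum over admissible $h$ delivers \eqref{result1-2}.

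The main obstacle is the trace bound in the form required on the unbounded domain. The classical trace theorem for bounded Lipschitz domains gives only $\|v|_{\partial\Omega}\|_{B^{\frac 1p,p'}}\leq C(\|\nabla v\|_{L^{p'}}+\|v\|_{L^{p'}})$, so one must either interpret the Besov space in homogeneous fashion or control $\|v\|_{L^{p'}(\Omega)}$ separately. The positive potential $V\in B_\infty$ supplies this control: combining Lemma \ref{remark2.11} with Proposition \ref{upper-bound-V} (and an $L^{p'}$-version of the Fefferman--Phong inequality) allows one to recover $\|v\|_{L^{p'}(\Omega)}\leq C(\|\nabla v\|_{L^{p'}(\Omega)}+\|V^{\frac12}v\|_{L^{p'}(\Omega)})$, while the bound $\|V^{\frac12}v\|_{L^{p'}(\Omega)}\leq C\|h\|_{L^{p'}(\Omega)}$ is available from the full form of \eqref{result1} behind Theorem \ref{sufficient condition}. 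A secondary technical point is justifying Green's identity for the pairing with a rough boundary datum $g\in B^{-\frac 1p,p}(\partial\Omega)$, which follows by the density of smooth functions in that Besov space and approximation of $u$ by compactly supported cut-offs that exploit the decay afforded by the Schr\"odinger structure.
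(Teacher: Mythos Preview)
Your overall strategy---duality against $h\in C_0^\infty(\Omega,\R^d)$, an auxiliary Neumann problem for $v$, cancellation of the bulk terms to reduce to a Besov boundary pairing, and then the trace theorem followed by Lemma~\ref{term1-1<p<2}---is exactly the paper's framework. The divergence is only in how you control the trace norm $\|v|_{\partial\Omega}\|_{B^{1/p,p'}}$, and there your workaround has a genuine gap.

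The paper never tries to bound $\|v\|_{L^{q}(\Omega)}$. Instead it subtracts a constant $c$ from $v$ in the boundary pairing $\langle g,\cdot\rangle$ and then uses the trace embedding together with Poincar\'e's inequality to obtain $\|v-c\|_{B^{1/p,q}(\partial\Omega)}\leq C\|v-c\|_{W^{1,q}(\Omega)}\leq C\|\nabla v\|_{L^{q}(\Omega)}$, which feeds directly into Lemma~\ref{term1-1<p<2}. This sidesteps the full $W^{1,p'}$ norm entirely.

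Your proposed substitute---Fefferman--Phong plus the $V^{1/2}v$ estimate---does not deliver what you claim. Lemma~\ref{remark2.11} (and any $L^{p'}$ analogue of it) controls $\|m(\cdot,V)\,v\|_{L^{p'}}$, not $\|v\|_{L^{p'}}$; passing from the former to the latter would require $m(x,V)$ to be bounded below on $\Omega$, which is not a consequence of $V\in B_\infty$. Moreover, the bound $\|V^{1/2}v\|_{L^{p'}(\Omega)}\leq C\|h\|_{L^{p'}(\Omega)}$ that you cite from \eqref{result1} is only available for $p'>2$ at this stage; for $p'<2$ it is established later (in the proof of Theorem~\ref{thm1} via the Neumann-function estimates), so invoking it here is circular. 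Replace your Fefferman--Phong detour with the paper's subtraction-of-constant device and the argument goes through.
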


\begin{proof}
  Let $h\in C^\infty_0(\Omega)$ and $w$ be the weak solution to
$$-\Delta (v-c)+V(v-c) ={\rm div}\;h\quad  \text{in } \Omega,\quad\text{ and }\quad\frac{\partial v}{\partial \nu}  =0 \quad\text{on } \partial\Omega,$$
where $c=\fint_{\Omega}v\,dx$. Then the weak formulation, the Sobolev embedding and Poincar\'e inequality imply that
\begin{equation}\label{3.3}
  \begin{aligned}
    \left|\int_\Omega h\cdot\nabla u\, dx\right|=\left|\int_{\partial\Omega} g(v-c)\, dx\right| &\leq\|g\|_{B^{-\frac 1p,p}(\partial\Omega)} \|v-c\|_{B^{\frac 1p,q}(\partial\Omega)}\\
    &\leq\|g\|_{B^{-\frac 1p,p}(\partial\Omega)} \|v-c\|_{W^{1,q}(\Omega)}\\
    &\leq\|g\|_{B^{-\frac 1p,p}(\partial\Omega)} \|\nabla v\|_{L^{q}(\Omega)}
  \end{aligned}
\end{equation}
where $p, q$ are conjugate. It follows from Lemma \ref{term1-1<p<2} that for $1<q<\infty$, $$\|\nabla v\|_{L^{q}(\Omega)}=\|\nabla (v-c)\|_{L^{q}(\Omega)}\leq C\|h\|_{L^{q}(\Omega)}.$$
This gives
\begin{equation}\label{6.2}
  \|\nabla u\|_{L^p(\Omega)}=\sup_{\|h\|_{L^{q}(\Omega)}\leq 1}\left|\int_\Omega h\cdot\nabla u\, dx\right|\leq C\|g\|_{B^{-\frac 1p,p}(\partial\Omega)}
\end{equation}
and thus \eqref{result1-2} holds for $1<p<\infty$.
\end{proof}

Finally we are in a position to give the proof of Theorem \ref{thm1}.

\begin{proof}[Proof of Theorem \ref{thm1}]
It follows directly from Lemma \ref{term1-1<p<2} and Lemma \ref{term1-g} that
\begin{equation*}
  \|\nabla u\|_{L^p(\Omega)}\leq C\left\{\|f\|_{L^p(\Omega)}+\|g\|_{B^{-\frac 1p,p}(\partial\Omega)}\right\}
\end{equation*}
for $1<p<\infty$. Next, to show
\begin{equation}\label{Vu-estimate}
  \|V^\frac{1}{2}u\|_{L^p(\Omega)}\leq C\left\{\|f\|_{L^p(\Omega)}+\|g\|_{B^{-\frac 1p,p}(\partial\Omega)}\right\},
\end{equation}
  decompose $u=u_1+u_2$ where $u_1,u_2$ are weak solutions of
\begin{equation*}
  \begin{aligned}
  \begin{cases}
-\Delta u_1+Vu_1 ={\rm div} f  &\text{in } \Omega,\\[0.1cm]
\hspace{3.35em}\frac{\partial u_1}{\partial \nu}  =-f\cdot n&\text{on } \partial\Omega,
  \end{cases}
\end{aligned}
\quad\text{ and }\quad
\begin{aligned}
\begin{cases}
-\Delta u_2+Vu_2 =0\quad  &\text{in } \Omega,\\[0.1cm]
\hspace{3.35em}\frac{\partial u_2}{\partial \nu}  =g\ &\text{on } \partial\Omega.
  \end{cases}
\end{aligned}
\end{equation*}
It follows from the Poisson representation formula and integration by parts that
  $$u_1(x)= \int_{\partial\Omega}  N(x, y)\frac{\partial u_1}{\partial \nu}d \sigma(y)+ \int_{\Omega}  N(x, y)(-\Delta +V)u_1\,d y =  -\int_{\Omega} \nabla_y N(x, y)f(y)d y.$$
By H\"older's inequality we have
\begin{equation}\label{4.6}
|u_1(x)| \leq \left\{\int_{\Omega}|\nabla_y N(x, y)| d y\right\}^{\frac1 q} \left\{\int_{\Omega}|\nabla_y N(x, y)||f(y)|^{p} d y\right\}^{\frac1 p}
\end{equation}
where $q=\frac{p}{p-1}$. Fix $x\in \partial\Omega$. Let $r_0=\frac{1}{m(x,V)}$ and $E_j=\{y\in\Omega: |x-y|\sim2^jr_0\}$. It follows from \eqref{N-estimate} and Caccippoli's inequality that
\begin{equation}\label{N-estimate-1}
  \begin{aligned}
    \int_{E_j}|\nabla_y N(x, y)|\, d y & \leq C(2^jr_0)^{\frac d2}\left(\int_{E_j}|\nabla_y N(x, y)|^2\, d y\right)^{\frac12}   \leq C(2^jr_0)^{\frac d2-1}\left(\int_{E_j}|N(x, y)|^2\, d y\right)^{\frac12}\\
    & \leq C(2^jr_0)^{\frac d2-1}\cdot\frac{(2^jr_0)^{\frac d2}}{(1+2^j)^{k}(2^jr_0)^{d-2}}=\frac{C 2^jr_0}{(1+2^j)^{k}}
  \end{aligned}
\end{equation}
where H\"older's inequality was also used in the first inequality. Taking $k=2$, we have
$$
\begin{aligned}
|u_1(x)| & \leq \frac{C}{m(x, V)^{1 / q}}\left\{\int_{\Omega}|\nabla_y N(x, y) \| f(y)|^{p} d y\right\}^{1 / p}
\end{aligned}
$$
This combining with Proposition \ref{upper-bound-V} gives that
\begin{align*}
  \int_{\Omega}|V^{\frac12}(x)u_1(x)|^{p}d x&\leq C \int_{\Omega}\left|m(x, V)u_1\right|^{p} d x\leq C \int_{\Omega}|f(y)|^{p}\left\{\int_{\Omega} m(x, V)|\nabla_y N(x, y)| d x\right\} d y.
\end{align*}
For fixed $y\in\partial\Omega$, Let $r_1=\frac{1}{m(y,V)}$ and $F_j=\{x\in\Omega: |x-y|\sim2^jr_1\}$. Together Lemma \ref{lemma-function m} with \eqref{N-estimate-1} yields that
$$
\int_{F_j}|\nabla_y N(x, y)|m(x, V)\, d x  \leq  \frac{C 2^j r_1}{(1+2^j)^{k}}\cdot(1+2^j)^{k_0}r_1^{-1}= \frac{C 2^j}{(1+2^j)^{2}}
$$
where $k$ is chosen to be $k_0+2$ in the second inequality. Thus we have
\begin{equation}\label{3.2}
\int_{\Omega} m(x, V)|\nabla_y N(x, y)| d x\leq C\sum_{j=-\infty}^{\infty}\frac{2^j}{(1+2^j)^{2}}\leq C
\end{equation}
which implies for $1<p<\infty$,
\begin{equation}\label{term2}
  \|V^\frac{1}{2}u_1\|_{L^p(\Omega)}\leq C\|f\|_{L^p(\Omega)}.
\end{equation}
Let $h\in C_0^\infty(\Omega)$ and $v$ solves
\begin{align*}
  \begin{cases}
-\Delta v+Vv =h  &\text{in } \Omega,\\[0.1cm]
\hspace{2.8em}\frac{\partial v}{\partial \nu}  =0&\text{on } \partial\Omega.
  \end{cases}
\end{align*}
Then as in \eqref{3.3}
\begin{align*}
  \left|\int_\Omega u_2 h \, dx\right|=\left|\int_{\partial\Omega} gv\, d\sigma\right|\leq \|g\|_{B^{-\frac 1p,p}(\partial\Omega)} \|\nabla v\|_{L^{q}(\Omega)}.
\end{align*}
By a duality argument, it suffices to show that
\begin{equation}\label{6.7}
  \int_{\Omega}|\nabla v|^q\, dx\leq C\int_{\Omega}\frac{|h(x)|^q}{m(x,V)^q}\, dx .
\end{equation}
To show \eqref{6.7}, note that
\begin{equation}\label{3.5}
  \begin{aligned}
    |\nabla v(x)|&=\left|\int_{\Omega} \nabla_x N(x, y)h(y)\,d\sigma(y)\right|\\
    &\leq C\left(\int_{\Omega} |\nabla_x N(x, y)|m(y,V)^p\,dy\right)^{\frac1p} \left(\int_{\Omega} |\nabla_x N(x, y)|\frac{|h(y)|^q}{m(y,V)^q}\,dy\right)^{\frac1q}.
  \end{aligned}
\end{equation}
A similar computation as \eqref{3.2} shows
\begin{equation}\label{3.4}
\int_{\Omega} |\nabla_x N(x, y)|m(y,V)^p\,dy\leq Cm(x,V)^{p-1}.
\end{equation}
Plugging \eqref{3.2} and \eqref{3.4} into \eqref{3.5} gives that
  \begin{align*}
    \int_{\Omega}|\nabla v|^q\, dx&\leq C\int_{\Omega} \frac{|h(y)|^q}{m(y,V)^q} \int_{\Omega} m(x,V)|\nabla_x N(x, y)|\,dx dy\leq C\int_{\Omega} \frac{|h(y)|^q}{m(y,V)^q}dy.
  \end{align*}

The uniqueness for $p>2$ and $1<p<2$ follows from the uniqueness for $p=2$ and the duality argument. And a limit argument leads the existence.
\end{proof}

\newpage
\bibliographystyle{amsplain}
\bibliography{Schrodinger}

\providecommand{\bysame}{\leavevmode\hbox to3em{\hrulefill}\thinspace}
\providecommand{\MR}{\relax\ifhmode\unskip\space\fi MR }
\providecommand{\MRhref}[2]{%
  \href{http://www.ams.org/mathscinet-getitem?mr=#1}{#2}
}
\providecommand{\href}[2]{#2}
\begin{thebibliography}{10}

\bibitem{auscher-2007}
P.~Auscher, \emph{On necessary and sufficient conditions for ${L}^p$-estimates
  of {R}iesz transforms associated to elliptic operators on $\mathbb{{R}}^n$
  and related estimates}, Mem. Amer. Math. Soc. \textbf{186} (2007), no.~871,
  xviii+75 pp.

\bibitem{AQ-2002}
P.~Auscher and M.~Qafsaoui, \emph{Observations on ${W}^{1,p}$ estimates for
  divergence elliptic equations with {VMO} coefficients}, Boll. Unione Mat.
  Ital. Sez. B Artic. Ric. Mat. (8) \textbf{5} (2002), no.~2, 487--509.

\bibitem{Byun-2005}
S.~Byun, \emph{Elliptic equations with {BMO} coefficients in {L}ipschitz
  domains}, Trans. Amer. Math. Soc. \textbf{357} (2005), no.~3, 1025--1046.

\bibitem{Byun-Wang-2004}
S.~Byun and L.~Wang, \emph{Elliptic equations with {BMO} coefficients in
  {R}eifenberg domains}, Comm. Pure Appl. Math. \textbf{57} (2004), no.~10,
  1283--1310.

\bibitem{Byun-Wang-2005}
\bysame, \emph{The conorma derivative problem for elliptic equations with {BMO}
  coefficients on {Reifenberg} flat domains}, Proc. London Math. Soc.
  \textbf{90} (2005), 245--272.

\bibitem{Byun-Wang-2008}
\bysame, \emph{Gradient estimates for elliptic systems in non-smooth domains},
  Math. Ann. \textbf{341} (2008), no.~3, 629--650.

\bibitem{Dong-Kim-2010}
H.~Dong and D.~Kim, \emph{Elliptic equations in divergence form with partially
  {BMO} coefficients}, Arch Rational Mech Anal \textbf{196} (2010), no.~1,
  25--70.

\bibitem{Fabes-1998}
E.~Fabes, O.~M{\'e}ndez, and M.~Mitrea, \emph{Boundary layers on
  {S}obolev-{B}esov spaces and {P}oisson's equation for the {L}aplacian in
  {L}ipschitz domains}, J. Funct. Anal. \textbf{159} (1998), 323--368.

\bibitem{Fefferman-1983}
C.~Fefferman, \emph{The uncertainty principle}, Bull. Amer. Math. Soc. (N.S.)
  \textbf{9} (1983), no.~2, 129--206.

\bibitem{Franchi-1991}
B.~Franchi, \emph{Weighted {S}obolev-{P}oincar\'{e} inequalities and pointwise
  estimates for a class of degenerate elliptic equations}, Trans. Amer. Math.
  Soc. \textbf{327} (1991), no.~1, 125--158.

\bibitem{Geng-2012}
J.~Geng, \emph{${W}^{1,p}$ estimates for elliptic problems with {N}eumann
  boundary conditions in {L}ipschitz domains}, Adv. Math. \textbf{229} (2012),
  no.~4, 2427--2448.

\bibitem{Geng-2018}
\bysame, \emph{Homogenization of elliptic problems with {N}eumann boundary
  conditions in non-smooth domains}, Acta Math. Sin. (Engl. Ser.) \textbf{34}
  (2018), 612--628.

\bibitem{GS-JFA-2010}
J.~Geng and Z.~Shen, \emph{The {N}eumann problem and {H}elmholtz decomposition
  in convex domains.}, J. Funct. Anal. \textbf{259} (2010), no.~8, 2147--2164.

\bibitem{geng-xu-submitted}
J.~Geng and Z.~Xu, \emph{On the {Schr\"odinger} equations with {$B_\infty$}
  potentials in the region above a {L}ipschitz graph}, submitted.

\bibitem{Grisvard-book-2011}
P~Grisvard, \emph{Elliptic problems in nonsmooth domains}, SIAM, Philadelphia,
  2011.

\bibitem{JK-1995}
D.~Jerison and C.~Kenig, \emph{The inhomogeneous {D}irichlet problem in
  {L}ipschitz domains}, J. Funct. Anal. \textbf{130} (1995), no.~1, 161--219.

\bibitem{lee-ok-2020}
M.~Lee and J.~Ok, \emph{Interior and boundary ${W}^{1,q}$-estimates for
  quasi-linear elliptic equations of {Schr\"odinger} type}, J. Differ. Equa
  \textbf{269} (2020), no.~5, 4406--4439.

\bibitem{MT-1999-JFA}
M.~Mitrea and M.~Taylor, \emph{Boundary layer methods for {L}ipschitz domains
  in {R}iemannian manifolds}, J. Funct. Anal. \textbf{163} (1999), no.~2,
  181--251.

\bibitem{Shen-2024}
R.~Righi and Z.~Shen, \emph{Dirichlet problems in perforated domains},
  arXiv:2402.13021, 2024.

\bibitem{Shen-1994-IUMJ}
Z.~Shen, \emph{On the {N}eumann problem for {S}chr$\ddot{o}$dinger operators in
  {L}ipschitz domains}, Indiana Univ. Math. J. \textbf{43} (1994), no.~1,
  143--176.

\bibitem{Shen-1995}
\bysame, \emph{${L}^p$ estimates for {S}chr$\ddot{o}$dinger operators with
  certain potentials}, Ann. Inst. Fourier (Grenoble) \textbf{45} (1995), no.~2,
  224--251.

\bibitem{Shen-1999}
\bysame, \emph{On fundamental solutions of generalized {Schr\"odinger}
  operators}, J. Funct. Anal. \textbf{167} (1999), no.~2, 521--564.

\bibitem{Shen-2005}
\bysame, \emph{Bounds of {R}iesz transforms on ${L}^p$ spaces for second order
  elliptic operators}, Ann. Inst. Fourier (Grenoble) \textbf{55} (2005), no.~1,
  173--197.

\bibitem{Shen-2007}
\bysame, \emph{The ${L}^p$ boundary value problems on {L}ipschitz domains},
  Adv. Math. \textbf{216} (2007), no.~1, 212--254.

\bibitem{Shen-book}
\bysame, \emph{Periodic homogenization of elliptic systems}, Operator Theory:
  Advances and Applications, vol. 269, Advances in Partial Differential
  Equations (Basel), Birkh${\rm \ddot{a}}$user/Springer, Cham, 2018.

\end{thebibliography}

\bigskip

\begin{flushleft}
Ziyi Xu,
School of Mathematics and Statistics,
Lanzhou University,
Lanzhou, P.R. China.

E-mail: 120220907780@lzu.edu.cn
\end{flushleft}
\bigskip
\end{document}